\newtheorem{thm}{\protect\theoremname}
\newtheorem{lem}[thm]{\protect\lemmaname}
\newtheorem{prop}[thm]{\protect\propositionname}
\newtheorem{cor}[thm]{\protect\corollaryname}
\newtheorem{dfn}[thm]{\protect\definitionname}
\providecommand{\lemmaname}{Lemma}
\providecommand{\propositionname}{Proposition}
\providecommand{\theoremname}{Theorem}
\providecommand{\corollaryname}{Corollary}
\providecommand{\definitionname}{Definition}
\def\ol{\overline}
\newcommand{\gph}{\mathrm{Gr}\,}
\def\s{\mathbf{s}}
\newcommand{\cl}{\mathrm{cl}\,}
\def\R{\mathbb{R}}
\def\N{\mathbb{N}}
\def\eps{\varepsilon}
\def\la{\lambda}
\def\grf{\mathrm{Gr}\,}
\def\supp{\mathrm{supp}\,}
\begin{document}

\title{Surjectivity in Fr\'echet Spaces}


\author{Milen Ivanov\footnote{Radiant Life Technologies, milen@radiant-life-technologies.com} \and  Nadia Zlateva\footnote{Faculty of Mathematics and Informatics, 
		St. Kliment Ohridski University of Sofia, 
		5, James Bourchier blvd., 1164 Sofia, Bulgaria, 
		zlateva@fmi.uni-sofia.bg}}

\date{January, 2019}

\maketitle

\begin{abstract}
We prove surjectivity result in Fr\'echet spaces of Nash-Moser type. That is, with uniform estimates over all seminorms.
	
	Our method works for functions, which are only continuous and strongly G\^ateaux differentiable.
	
	We present the results in multi-valued setting exploring the relevant notions of map regularity.
	
	The key to our method is in geometrising the tameness estimates and thus reducing the problem to a spectrum of problems on suitable Banach spaces. For solving the latter problems we employ an abstract iteration scheme developed by the authors.
\end{abstract}
\noindent
\textbf{Keywords and phrases:}
surjectivity,  metric regularity,  multi-valued map, Fr\'echet space, Nash-Mozer-Ekeland theorem.

\bigskip\noindent
\emph{AMS Subject Classification}: 49J53, 47H04, 54H25

\thispagestyle{empty}
\newpage


\section{Introduction}

 Theorem of Nash and Moser is a useful tool for studying solvability of certain nonlinear problems with infinitely smooth data. If the corresponding local linear problem is solvable and  some uniform estimates are available, then the original problem is also solvable. In this setting, since the spaces of infinitely smooth functions are not Banach, the usual Inverse Theorems may not work. The monograph \cite{pdonmt} contains a good overview of the subject.
	
	The  proof of Nash-Moser Theorem relies on Newton method to overcome the so called \textit{loss of derivatives}. Thus, it works for twice smooth functions and with  additional assumptions; see \cite{hamilton}.
	
	Often, the injectivity part of Nash-Moser Inverse Theorem is not interesting for applications. What is important is to have some solution for a given right-hand side, that is, surjectivity. See, for example, \cite[p.145]{pdonmt}.
	
	The work \cite{ekeland}, which is one of the inspirations for the present one, shows that certain surjectivity can be proved for functions, which are only G\^ateaux differentiable. The method used is an application of Ekeland Variational Principle to suitably constructed Banach space resembling $l_1$. However, the estimate obtained there is for one chosen norm of the Fr\'echet space (more precisely, for linear combination of the norms, but with non-canonic coefficients), while in the original Nash-Moser Theorem all seminorms are estimated.
	
	The results of \cite{ekeland} were further extended -- with different proofs, but using some of the ideas -- to the more difficult case of non-autonomous \emph{tameness estimates} in \cite{esere}. An application to a classical problem is given there.
	
	Also, because the method of \cite{ekeland} does not require second derivatives, it can be extended to multi-valued mappings; see, e.g., \cite{ngth}.
	
	Here, we further the above development by proving surjectivity result for multi-valued mappings with estimates for all semi-norms. It readily renders to the case of strongly G\^ateaux differentiable function, see Theorem~\ref{thm:ci} below. In place of G\^ateaux differential we use a slight modification of the \textit{contingent variation} defined in \cite{frankovska}.

	The key to our method is considering suitable Banach spaces (resembling $l_\infty$) that fit the structure of the problem. In other words, we geometrize the tameness estimates through the parallelograms $\Pi_{\mathbf{s}}$, see (\ref{eq:def:pi}), which then serve as unit balls of Banach spaces.
	
	Working with multi-valued maps, we highlight the relationship of Nash-Moser Theorem to one of the central concepts in Variational Analysis, namely metric regularity; see, e.g., \cite{Ioffe_book}. Of course, such relationship is already shown in \cite{ngth}. In short, we show that the standard assumptions of Nash-Moser Theorem imply a kind of weak metric regularity of the map. It is interesting that the latter in turn implies metric regularity in one of the possible metrics for the Fr\'echet space.
	
	In the important case of the spaces of infinitely smooth functions, the conclusions are more tight: in fact, as tight as might be reasonably expected, for example we get regularity instead of weak regularity.
	
	It is a feature of our method that it makes crucial the use of the abstract iteration scheme developed in \cite{LOEV}.
	
	However,  non-autonomous tameness estimates, as in \cite{esere}, will be subject of future work.
	
	The reader may get some immediate flavour of the results presented here by looking at Theorem~\ref{thm:ci} and comments there. Theorem~\ref{thm:ci} is fashioned after \cite[Theorem~1]{ekeland}, so it is easy to compare the result to what is already established.

	The  article is organized as follows.
	
	In the next section, we give the precise statements of all our main results. In Section~\ref{sec:prelim}, we establish the technical tools we will be using. In Section~\ref{sec:loev}, we recall the iteration  method we use for approximately solving the equations. Finally, the  proofs are given in Section~\ref{sec:proofs}.

\section{Statements of the Main Results}\label{statements}

Generally speaking, a \textit{Fr\'echet space} is a complete locally convex topological vector space whose topology can be generated by a translation-invariant metric. Then taking Minkowski functions of a countable local base of convex symmetric neighbourhoods of zero, we obtain countable family of seminorms, which also define the topology, see for details \cite[pp.110-114]{fabetal}.

In the context of Nash-Moser-Ekeland theory, however, a set of seminorms is given in advance and the estimates are \textit{in terms} of the given seminorms. Therefore, when the seminorms are fixed, we simply say that \textit{Fr\'echet space} $(X,\|\cdot\|_n)$ is a linear space $X$ with a collection
of seminorms $\|\cdot\|_{n}$, $n=0,\ldots,\infty$, which is separating, that is,
$\|x\|_{n}=0$, $\forall n,$ if and only if $x=0$; and, moreover, equipped with  the metric
\begin{equation}\label{eq:rho}
\rho_X(x,y):=\max_{n\ge 0}\frac{2^{-n}\|x-y\|_{n}}{1+\|x-y\|_{n}}
\end{equation}
$(X,\rho_X)$ is complete metric space.

Denote (as somewhat standard)
$$
  \mathbb{R}_+^\infty := \{(s_{i})_{i=0}^{\infty}:\ s_{i}\ge0\},
$$
that is, $\mathbb{R}_+^\infty$ is the cone of all positive sequences indexed from $0$ on.

For $\mathbf{s}\in \mathbb{R}_+^\infty$, $\mathbf{s}=(s_{i})_{i=0}^{\infty}$, define
\begin{equation}
	\label{eq:def:pi}
	\Pi_{\mathbf{s}}(X):=\{x\in X:\ \|x\|_{n}\le s_{n},\ \forall n\ge 0\}.
\end{equation}

Perhaps, the most used example of Fr\'echet space is $X = C^\infty (\Omega)$, where $\Omega$ is a compact domain in $\mathbb{R}^n$. Such spaces  are '\emph{tame}' in many aspects. The one we focus on, see Corollary~\ref{cor:compact}, is that $\Pi_{\mathbf{s}}$ are compact for all $\mathbf{s}\in \mathbb{R}_+^\infty$. For a proof see Proposition~\ref{pro:ci-comp}.

\begin{dfn}
  Let $(M_i,\rho_i)$, $i=1,2$, be linear metric spaces,
  $$
    F:M_1\rightrightarrows M_2
  $$
  be a multi-valued map, and $A\subset M_1$ be a non-empty set. For $(x,y)\in \grf F:=\{(x,y):\ y\in F(x)\}$ define $D'F (x,y)(A)\subset M_2$ by
  $$
    v\in D'F (x,y)(A)\iff \exists t_n\downarrow 0:\ \lim_{n\to \infty} \frac{\inf \rho_2 (F(x+t_nA),y+t_nv)}{t_n} = 0.
  $$
\end{dfn}
The above notion -- which essentially is defined in \cite{frankovska} -- extends the so called contingent, or graphical, derivative, see  \cite[pp.163, 202]{Ioffe_book}.

It is immediate from the definition that
 \[ D'F (x,y) (tA) = t D'F (x,y) (A),\quad\forall t > 0.\]

For a point $y$ and a non-empty set $C$ in $M_2$, let us define $[y,y+C]:=\{ y+tc : c\in C,\ t\in [0,1]\}$.

We are ready to formulate our first cornerstone result.

\begin{thm}\label{thm:kornerstone}
  Let $(X,\|\cdot\|_n)$ and $(Y,|\cdot|_n)$ be Fr\'echet spaces and let
  $$
    F:X\rightrightarrows Y
  $$
  be a multi-valued map with closed graph.

  Let $U\subset X$ and $V\subset Y$ be open and such that $\gph F\cap (U\times V)\neq \emptyset$.

  Assume that for some $\mathbf{s}\in \mathbb{R}_+^\infty$ and some non-empty set  $C\subset Y$ it holds that
 \[
    D'F(x,y)(\Pi_{\mathbf{s}}(X))\supset  C,\quad \forall (x,y)\in \left( U\times V\right) \cap \grf F.
  \]
  Then,
  \[    \cl F(x+\Pi_{\mathbf{s}}(X)) \supset y + C\]
  for all $(x,y)\in\grf F$ such that $x+\Pi_{\mathbf{s}}(X)\subset U$, and $[y,y+C]\subset V$.
\end{thm}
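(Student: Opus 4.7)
My strategy is to geometrise the tameness estimates by fitting a Banach-space structure to the parallelogram $\Pi_{\mathbf{s}}(X)$, and then to invoke the abstract iteration scheme of \cite{LOEV}. Let $X_{\mathbf{s}}$ consist of those $u \in X$ for which $\|u\|_n \le M s_n$ holds for some $M > 0$ and all $n$, normed by $\|u\|_{\mathbf{s}} := \inf\{M > 0 : \|u\|_n \le M s_n\ \forall n\}$. Routine arguments show that $(X_{\mathbf{s}},\|\cdot\|_{\mathbf{s}})$ is a Banach space whose closed unit ball equals $\Pi_{\mathbf{s}}(X)$, and that the inclusion $X_{\mathbf{s}} \hookrightarrow (X,\rho_X)$ is continuous. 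Unfolding the assumption $D'F(x',y')(\Pi_{\mathbf{s}}(X)) \supset C$ at an arbitrary $(x',y') \in \gph F \cap (U \times V)$ yields the following pointwise approximate openness: for each $v \in C$, each $\eta > 0$ and each $\bar t > 0$, there exist $t \in (0,\bar t]$, $a \in \Pi_{\mathbf{s}}(X)$ and $y'' \in F(x'+ta)$ with $\rho_Y(y'',y'+tv) \le \eta t$.

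Fix $c \in C$. Starting from $(x_0,y_0) := (x,y) \in \gph F \cap (U \times V)$, the plan is to iteratively construct $(x_k,y_k) \in \gph F$ with increments $x_{k+1} - x_k = t_k a_k$, $a_k \in \Pi_{\mathbf{s}}(X)$, $t_k > 0$. At step $k$, set $\sigma_k := \sum_{j<k} t_j$ and apply the openness at $(x_k,y_k)$ with direction $v = c$, remaining budget $\bar t_k = 1 - \sigma_k$, and accuracy $\eta_k \downarrow 0$, producing $t_k \in (0,\bar t_k]$, $a_k \in \Pi_{\mathbf{s}}(X)$ and $y_{k+1} \in F(x_k + t_k a_k)$ with $\rho_Y(y_{k+1}, y_k + t_k c) \le \eta_k t_k$. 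By induction $y_k$ stays close (within $\sum_{j<k}\eta_j t_j$ in $\rho_Y$) to the segment point $y + \sigma_k c \in [y,y+C] \subset V$, so $(x_k,y_k) \in U \times V$ throughout. The scheme of \cite{LOEV} prescribes the choices of $t_k$ and $\eta_k$ that force $\sigma_k \uparrow 1$ while keeping $\sum_j \eta_j t_j$ summable, delivering $y_k \to y + c$.

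Since $\sum_k \|x_{k+1} - x_k\|_{\mathbf{s}} \le \sum_k t_k \le 1$, the sequence $(x_k)$ is Cauchy in $X_{\mathbf{s}}$ and hence converges in $X_{\mathbf{s}}$ and a fortiori in $(X,\rho_X)$ to some $x^* \in x + \Pi_{\mathbf{s}}(X)$. Together with $y_k \to y + c$ and the closedness of $\gph F$, this yields $(x^*, y + c) \in \gph F$, whence $y + c \in F(x + \Pi_{\mathbf{s}}(X))$ — slightly stronger than the stated closure conclusion. The chief obstacle is the coupled book-keeping: the definition of $D'F$ only guarantees usable step sizes $t$ along some asymptotic sequence $t_n \downarrow 0$, so extracting a $(t_k)$ summable to $1$ is delicate; simultaneously the accuracies $\eta_k$ must shrink fast enough to keep the cumulated $\rho_Y$-deviation inside an open tube around the segment $[y, y+c] \subset V$, which is precisely where the hypothesis $[y,y+C] \subset V$ is used. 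Marrying these two controls is exactly the problem the scheme from \cite{LOEV} is designed to handle.
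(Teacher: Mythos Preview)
Your overall architecture --- pass to the Banach space $(X_{\mathbf{s}},\|\cdot\|_{\mathbf{s}})$ with unit ball $\Pi_{\mathbf{s}}(X)$ and then run a LOEV-based iteration along the direction $c$ --- is exactly the paper's route (the paper packages the iteration as Proposition~\ref{prop:4} and then invokes it). Where you go wrong is in what you claim the iteration delivers.

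You assert that the scheme produces a single orbit with $\sigma_k\uparrow 1$ and $y_k\to y+c$, hence $y+c\in F(x+\Pi_{\mathbf{s}}(X))$ without the closure. Neither half of this is justified. First, even if $\sigma_k\to 1$, your own telescoping bound gives only
\[
\rho_Y\bigl(y_k,\;y+\sigma_k c\bigr)\ \le\ \sum_{j<k}\eta_j t_j,
\]
so the limit $y^*$ of $(y_k)$ satisfies $\rho_Y(y^*,y+c)\le\sum_j\eta_j t_j$, a strictly positive number; you do not get $y^*=y+c$. Second, LOEV does not force $\sigma_k\uparrow 1$ along one orbit. What the paper actually extracts from LOEV (see the proof of Proposition~\ref{prop:4}) is a contradiction argument: for a \emph{fixed} $\varepsilon>0$ one shows that if the partial sums $p_n=\sum_{i\le n}t_i$ stayed below $1-\varepsilon$ forever, the orbit would have finite length yet remain inside $M'$, contradicting the LOEV dichotomy. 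Hence some $p_m$ lands in $(1-2\varepsilon,1-\varepsilon]$, and at that moment $(x_{m+1},y_{m+1})$ is an $\varepsilon$-approximate solution with $x_{m+1}\in x+\Pi_{\mathbf{s}}(X)$. Varying $\varepsilon$ yields a sequence in $F(x+\Pi_{\mathbf{s}}(X))$ whose images approach $y+c$, which is precisely the closure conclusion and no more. The $x$-components of these approximants, coming from different orbits, need not converge; getting them to converge is exactly what the compactness hypothesis in Corollary~\ref{cor:compact} is for. So your ``slightly stronger'' conclusion is not available here, and the sentence ``this is exactly the problem the scheme from \cite{LOEV} is designed to handle'' papers over the real content of the argument.
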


It easily follows that, if $\Pi_{\mathbf{s}}(X)$ is compact, then the conclusion of Theorem~\ref{thm:kornerstone} holds without closure as we prove in the following corollary.

\begin{cor}\label{cor:compact}
  Let the assumptions of Theorem~$\ref{thm:kornerstone}$ hold and let, moreover, $\Pi_{\mathbf{s}}(X)$ be compact.

 Then,
  \[
 {F(x+\Pi_{\mathbf{s}}(X))}\supset y + C
 \]
 for  all $(x,y)\in\grf F$ such that $x+\Pi_{\s}(X)\subset  U$, and    $[y,y+C]\subset V$.
  \end{cor}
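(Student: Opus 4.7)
The plan is to deduce the corollary directly from Theorem~\ref{thm:kornerstone} by using compactness of $\Pi_{\mathbf{s}}(X)$ to promote a limit of approximate preimages to an exact preimage. Since the only difference between the two statements is the closure on the right-hand side, it suffices to show that $F(x+\Pi_{\mathbf{s}}(X))$ is already closed (or at least that every point of $\cl F(x+\Pi_{\mathbf{s}}(X))$ that we care about is attained).

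First I would fix $c\in C$ and invoke Theorem~\ref{thm:kornerstone} to obtain $y+c\in \cl F(x+\Pi_{\mathbf{s}}(X))$. Then I unpack this: there exists a sequence $\{(x_k,y_k)\}\subset \gph F$ with $x_k\in x+\Pi_{\mathbf{s}}(X)$ and $y_k\to y+c$ in $(Y,\rho_Y)$. Since $x+\Pi_{\mathbf{s}}(X)$ is a translate of the compact set $\Pi_{\mathbf{s}}(X)$, it is itself compact in the metric $\rho_X$, so I may pass to a subsequence and assume $x_k\to x^{*}$ with $x^{*}\in x+\Pi_{\mathbf{s}}(X)$ (closedness of $\Pi_{\mathbf{s}}(X)$, which follows from continuity of each $\|\cdot\|_n$, is enough to secure membership).

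Next I would use the standing assumption that $\gph F$ is closed: since $(x_k,y_k)\to (x^{*},y+c)$ in $X\times Y$ and each $(x_k,y_k)\in \gph F$, we get $(x^{*},y+c)\in \gph F$, i.e.\ $y+c\in F(x^{*})\subset F(x+\Pi_{\mathbf{s}}(X))$. Since $c\in C$ was arbitrary, this gives the desired inclusion $F(x+\Pi_{\mathbf{s}}(X))\supset y+C$.

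There is no serious obstacle here; the argument is a routine compactness-plus-closed-graph sequential extraction. The only subtle point worth a sentence of care is that the topology with respect to which we take closures and limits is the Fr\'echet topology induced by $\rho_X$ and $\rho_Y$, and that this is the same topology in which $\Pi_{\mathbf{s}}(X)$ is assumed compact; given the conventions fixed in Section~\ref{statements}, this is automatic.
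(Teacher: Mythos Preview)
Your proof is correct and follows essentially the same approach as the paper's: fix a point of $y+C$, use Theorem~\ref{thm:kornerstone} to obtain an approximating sequence in $\grf F$ with first coordinates in the compact set $x+\Pi_{\mathbf{s}}(X)$, extract a convergent subsequence, and conclude via closedness of $\grf F$. The paper's proof is identical in structure and detail.
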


We use closed balls, so $B_X$ is the \textit{closed} unit ball of the Banach space $(X, \|\cdot\|)$, that is, $B_X:= \{x\in X:\ \|x\|\le 1\}$, while for a metric space $(M,\rho)$
$$
	B_\rho(x;r):= \{y\in M:\ \rho(x,y)\le r\}.
$$

To motivate the following definitions, let us recall Definition~2.35 from  \cite{Ioffe_book}: if $(M_i,\rho_i)$, $i=1,2$ are metric spaces, then the multi-valued map  $F:M_1\rightrightarrows M_2$
 is said to be \textit{restrictedly} $\gamma$-\textit{open at linear rate on} $(U,V)$ for sets $U\subset M_1$,  $V\subset M_2$, and extended real-valued function $\gamma$ on $M_1$ assuming positive values (possibly infinite) if there is an $r>0$ such that
\[
F(B_{\rho_1}(x,t))\supset B_{\rho_2}(y,rt)\cap V \]
whenever $(x,y)\in \grf F$, $x\in U$, $y\in V$ and $t<\gamma(x)$. Restricted  $\gamma$-openness at linear rate of $F$ is equivalent to its restricted  $\gamma$-\textit{metric regularity}  (see \cite{Ioffe_book}).

\begin{dfn}\label{def:pi-reg}
Let  $(X,\|\cdot\|_n)$ and $(Y,|\cdot|_n)$ be Fr\'echet spaces and let $U\subset X$ and $V\subset Y$ be nonempty and open.

The multi-valued map $F:X\rightrightarrows Y$ is said to be $\Pi$-\emph{surjective}
on $(U,V)$ if there is a constant $\kappa  > 0$ such that whenever $(x,y) \in (U\times V)\cap\grf F$ and $\s \in\mathbb{R}_+^\infty$ are such that $x+  \Pi_\s(X)\subset U$, it holds that
$$F(x+  \Pi_\s(X)) \supset \{y+ \kappa\Pi_\s(Y)\}\cap V.$$
\end{dfn}

We will need also the following straightforward weakening of the above notion.

\begin{dfn}\label{def:w-pi-reg}
Let  $(X,\|\cdot\|_n)$ and $(Y,|\cdot|_n)$ be Fr\'echet spaces and let $U\subset X$ and $V\subset Y$ be nonempty and open.

The multi-valued map $F:X\rightrightarrows Y$ is said to be \emph{weakly} $\Pi$-\emph{surjective}
on $(U,V)$ if there is a constant  $\kappa  > 0$ such that whenever $(x,y) \in (U\times V)\cap\grf F$ and $\s \in\mathbb{R}_+^\infty$ are such that $x+ \Pi_\s(X)\subset U$, it holds that
$$\cl {F(x+  \Pi_\s(X))} \supset\{y+ \kappa\Pi_\s(Y)\}\cap V.$$
\end{dfn}

It is not clear whether weak $\Pi$-surjectivity is equivalent to $\Pi$-surjectivity. However, we have the following implication.

\begin{thm}\label{thm:w-reg-m-reg}
  Let  $(X,\|\cdot\|_n)$ and $(Y,|\cdot|_n)$ be Fr\'echet spaces and let $U\subset X$ and $V\subset Y$ be nonempty and open.

  If the multi-valued map $F:X\rightrightarrows Y$ with closed graph is weakly $\Pi$-surjective on $(U, V)$ then there is $\theta >0$ such that
  \[
\begin{array}{c}
F(B_{\rho_X}(x;r))\supset B_{\rho_Y}(y;\theta r)\cap V,\\[10pt]
\forall (x,y)\in\grf F\cap (U\times V),\ \forall r:\ 0<r< m_U(x),
 \end{array}
\]
where
$$
  m_U(x) := \mathrm{dist}(x,X\setminus U).
$$
That is, $F$ is restrictedly $\gamma$-open at liner rate on $(U,V)$ for $\gamma (x)=m_U(x)$.
\end{thm}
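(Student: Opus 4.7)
The plan is to prove the result by iteration: weak $\Pi$-surjectivity produces a sequence of near-preimages of the target, and closedness of $\gph F$ provides the limit. (An Ekeland-type approach is also conceivable, but it is awkward to keep the perturbed point inside $V$; the constructive iteration is cleaner here.)

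First I would prove a geometric lemma that, for $\s \in \mathbb{R}_+^\infty$ with $s_n = a_n/\kappa$,
\[
    \sup_{z \in \Pi_\s(X)} \rho_X(0, z) = \sup_n \frac{2^{-n} s_n}{1+s_n} = \sup_n \frac{2^{-n} a_n}{\kappa + a_n} \leq C \sup_n \frac{2^{-n} a_n}{1+a_n}
\]
with $C := \max(1, 1/\kappa)$, via the elementary pointwise bound $(1+a)/(\kappa+a) \leq C$ for $a \geq 0$. Taking $a_n = |y' - y''|_n$ yields $\sup_{z \in \Pi_\s(X)} \rho_X(0, z) \leq C \rho_Y(y', y'')$. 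I then set $\theta := 1/(3C) = \min(1, \kappa)/3$.

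Given $(x, y) \in \gph F \cap (U \times V)$, $r \in (0, m_U(x))$, and a target $y^* \in B_{\rho_Y}(y; \theta r) \cap V$, I would build a sequence $(x_k, y_k) \in \gph F \cap (U \times V)$ inductively with $(x_0, y_0) := (x, y)$. At step $k$, define $s_n^{(k)} := |y^* - y_k|_n/\kappa$, so that $y^* - y_k \in \kappa \Pi_{\s^{(k)}}(Y)$ by construction. The lemma combined with the telescoping bound $\rho_X(x_k, x) \leq 2 C \theta r$ gives
\[
    x_k + \Pi_{\s^{(k)}}(X) \subset B_{\rho_X}(x; 3 C \theta r) \subset B_{\rho_X}(x; r) \subset U,
\]
so weak $\Pi$-surjectivity applies and yields $y^* \in \cl F(x_k + \Pi_{\s^{(k)}}(X))$. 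I then select $(x_{k+1}, y_{k+1})$ from an approximating sequence with $x_{k+1} - x_k \in \Pi_{\s^{(k)}}(X)$ and $\rho_Y(y_{k+1}, y^*) < \min(\rho_Y(y_k, y^*)/2, \mathrm{dist}_{\rho_Y}(y^*, Y \setminus V)/2)$, where the second bound (positive since $V$ is open and $y^* \in V$) forces $y_{k+1} \in V$ and the first produces geometric contraction.

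The geometric decay $\rho_Y(y_k, y^*) \leq 2^{-k} \theta r$ coupled with $\rho_X(x_{k+1}, x_k) \leq C \rho_Y(y^*, y_k)$ makes $(x_k)$ $\rho_X$-Cauchy with limit $x^*$ satisfying $\rho_X(x^*, x) \leq 2 C \theta r \leq r$, and simultaneously $y_k \to y^*$ in $\rho_Y$. Closedness of $\gph F$ then yields $(x^*, y^*) \in \gph F$, so $y^* \in F(B_{\rho_X}(x; r))$ as required. The main obstacle is that weak $\Pi$-surjectivity demands the base point of every iteration to lie in $U \times V$, whereas only $(x_0, y_0)$ is a priori given there; the resolution is to enforce $y_{k+1} \in V$ at the selection step via the positive quantity $\mathrm{dist}_{\rho_Y}(y^*, Y \setminus V)$, which notably does not enter the final constant $\theta$.
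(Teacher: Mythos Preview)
Your proof is correct and takes a genuinely different route from the paper's.

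The paper proceeds in two steps. First, using the same geometric estimate you prove (which is the paper's Lemma~\ref{lem:key}: $\Pi_\s(X)\subset B_{\rho_X}(0;|\s|)$, suitably rescaled), it shows directly from weak $\Pi$-surjectivity that
\[
  \cl F(B_{\rho_X}(x;r))\supset B_{\rho_Y}(y;\alpha r)\cap V
\]
for any $\alpha<\min\{1,\kappa\}$. Second, it invokes a separate density lemma (Lemma~\ref{lem:density}), proved via Ekeland's variational principle on $\gph F\cap(\overline U\times\overline V)$, to strip the closure and obtain genuine openness with any constant $\beta<\alpha$. So the paper does exactly the ``Ekeland-type approach'' you set aside, and the issue of keeping the perturbed point inside $V$ is absorbed into that lemma.

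Your argument replaces the Ekeland step by a direct geometric iteration: at each stage you choose $\s^{(k)}$ so that the target $y^*$ sits on the boundary of $y_k+\kappa\Pi_{\s^{(k)}}(Y)$, apply weak $\Pi$-surjectivity, and pick an approximant halving the residual while staying in $V$. The bookkeeping that all iterates remain in $U\times V$ is the only delicate point, and you handle it correctly via the telescoping bound and the choice $\theta=1/(3C)$. This is more elementary (no variational principle) and yields an explicit constant $\theta=\min(1,\kappa)/3$, whereas the paper's route gives any $\theta<\min(1,\kappa)$ but at the cost of the auxiliary Lemma~\ref{lem:density}. One cosmetic point: in your lemma the first ``$=$'' should be ``$\le$'', since $\Pi_\s(X)$ need not contain an element with $\|z\|_n=s_n$ for every $n$; only the inequality is needed and it is what you actually use.
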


In this context, Theorem~\ref{thm:kornerstone} easily yields another of our main results, namely

\begin{thm}\label{thm:main-pi-surj}
  Let $(X,\|\cdot\|_n)$ and $(Y,|\cdot|_n)$ be Fr\'echet spaces and let
  $$
    F:X\rightrightarrows Y
  $$
  be a multi-valued map with closed graph.

  Let $U\subset X$ be open and let  $V\subset Y$ be open and convex. Assume that for some $\kappa>0$
  \[
    D'F(x,y)(\Pi_{\mathbf{s}}(X))\supset \kappa \Pi_{\mathbf{s}}(Y),\quad \forall (x,y)\in \left( U\times V\right) \cap \grf F,\ \forall \mathbf{s}\in \mathbb{R}_+^\infty.
  \]
  Then, $F$ is weakly $\Pi$-surjective on $(U, V)$ with constant $\kappa$.

  If, moreover, $\Pi_{\mathbf{s}}(X)$ are compact for all $\s\in \mathbb{R}_+^\infty$, then $F$ is $\Pi$-surjective on $(U, V)$ with constant $\kappa$.
\end{thm}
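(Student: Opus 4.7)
The plan is to reduce Theorem~\ref{thm:main-pi-surj} directly to Theorem~\ref{thm:kornerstone} (and Corollary~\ref{cor:compact} for the second part). The only work to do is to identify, at each point $(x,y)\in(U\times V)\cap\grf F$ and each $\s\in\mathbb{R}_+^\infty$ with $x+\Pi_\s(X)\subset U$, an appropriate set $C\subset Y$ to feed into Theorem~\ref{thm:kornerstone} so that its conclusion spells out exactly the definition of weak $\Pi$-surjectivity (respectively $\Pi$-surjectivity).

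First I would fix such a triple $(x,y,\s)$ and choose the candidate set
\[
  C := \bigl(\kappa\,\Pi_\s(Y)\bigr)\cap (V-y).
\]
The hypothesis on $D'F$ immediately yields $D'F(x',y')(\Pi_\s(X))\supset\kappa\Pi_\s(Y)\supset C$ for every $(x',y')\in(U\times V)\cap\grf F$, which is the first requirement of Theorem~\ref{thm:kornerstone}. Next I would check the segment condition $[y,y+C]\subset V$: for $v\in C$ we have $y+v\in V$ and $y\in V$, so convexity of $V$ gives $(1-t)y+t(y+v)=y+tv\in V$ for every $t\in[0,1]$. Thus the hypotheses of Theorem~\ref{thm:kornerstone} are met.

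Applying Theorem~\ref{thm:kornerstone} then gives
\[
  \cl F\bigl(x+\Pi_\s(X)\bigr)\;\supset\; y+C \;=\; \bigl(y+\kappa\Pi_\s(Y)\bigr)\cap V,
\]
which is exactly the inclusion required in Definition~\ref{def:w-pi-reg}. Since $(x,y)$ and $\s$ were arbitrary (subject to $x+\Pi_\s(X)\subset U$), $F$ is weakly $\Pi$-surjective on $(U,V)$ with constant $\kappa$. For the second statement, when every $\Pi_\s(X)$ is compact, the identical argument with Corollary~\ref{cor:compact} in place of Theorem~\ref{thm:kornerstone} drops the closure and delivers $\Pi$-surjectivity with the same constant.

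In short, there is no real obstacle here once Theorem~\ref{thm:kornerstone} is in hand; the only non-cosmetic ingredient is the convexity of $V$, which is precisely what makes the segment condition $[y,y+C]\subset V$ automatic and thereby lets us apply the cornerstone theorem uniformly in $\s$.
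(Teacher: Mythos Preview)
Your proposal is correct and follows essentially the same route as the paper: fix $(x,y,\s)$, set $C=\kappa\Pi_\s(Y)\cap(V-y)$, verify the hypotheses of Theorem~\ref{thm:kornerstone} (resp.\ Corollary~\ref{cor:compact}), and read off (weak) $\Pi$-surjectivity. The only cosmetic difference is in checking $[y,y+C]\subset V$: the paper argues via $0\in C$ and convexity of $C$ to get $tC\subset C\subset V-y$, whereas you use convexity of $V$ directly on the segment between $y$ and $y+v$; both are equally valid.
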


Finally, we present a corollary for the case of function, that is, single-valued map.

	We call the space $X$ \textit{compactly graded} if $X=\cap_0^\infty X_n$, where $(X_n,\|\cdot\|_n)$ are nested Banach spaces: $X_{n+1}\subset X_n$ and the identity operator from $X_{n+1}$ into $X_n$ is compact. We mostly have in mind $C^\infty(\Omega)$, where $\Omega\subset \R^n$ is compact domain.

The function $f:M_1\to M_2$, where $(M_i,\rho_i)$ are metric spaces, is called \textit{strongly G\^ateaux differentiable} at $x\in M_1$ if there exists a bounded linear operator $f'(x):M_1\to M_2$ such that
$$
\lim_{t\downarrow 0} \frac{\rho_2(f(x+th)-f(x),t f'(x)(h))}{t} = 0,\quad \forall h\in M_1.
$$
Any strongly G\^ateaux differentiable function $f:M_1\to M_2$ is G\^ateaux differentiable, i.e.
$$
\lim_{t\downarrow 0} \rho_2\left(\frac{f(x+th)-f(x)}{t}, f'(x)(h)\right) = 0,\quad \forall h\in M_1.
$$

\begin{thm}\label{thm:ci}
	Let $X=\cap_0^\infty(X_n,\|\cdot\|_n)$ and $Y=\cap_0^\infty (Y_n,|\cdot|_n)$ be compactly graded spaces. Let $f:X\to Y$ be continuous, strongly G\^ateaux differentiable  and such that $f(0)=0$.
	
	Assume that there are $c>0$ and $d\in \{0\}\cup\mathbb{N}$, such that for each $x\in X$ and $v\in Y$
	$$
	\exists u\in X:\ f'(x)u=v\mbox{ and }
	\|u\|_n\le c|v|_{n+d},\quad\forall n\ge 0.
	$$	
	Then for each  $y\in Y$ there is $x\in X$ such that
	$$
	f(x)=y\mbox{ and }
	\|x\|_n\le c|y|_{n+d},\quad\forall n\ge 0.
	$$		
\end{thm}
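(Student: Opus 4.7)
The natural strategy is to apply Corollary~\ref{cor:compact} with $F=f$, $U=X$, $V=Y$, and base point $(x_0,y_0)=(0,0)\in\gph f$. Given the target $y\in Y$, we tailor the parameters to the right-hand side by setting
$$
\mathbf{s}:=(c|y|_{n+d})_{n\ge 0}\in\mathbb{R}_+^\infty,\qquad C:=[0,y]=\{ty:t\in[0,1]\}.
$$
With these choices the inclusions $x_0+\Pi_{\mathbf{s}}(X)\subset X$ and $[y_0,y_0+C]=C\subset Y$ are automatic; $\gph f$ is closed since $f$ is continuous; and the compactness of $\Pi_{\mathbf{s}}(X)$ required by Corollary~\ref{cor:compact} is delivered by Proposition~\ref{pro:ci-comp}, which is where the compactly graded structure of $X$ intervenes.

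The one nontrivial verification is the contingent-variation hypothesis $D'f(x,f(x))(\Pi_{\mathbf{s}}(X))\supset C$ for every $x\in X$. Given $w=ty\in C$ with $t\in[0,1]$, the right-invertibility assumption produces $u\in X$ with $f'(x)u=w$ and $\|u\|_n\le c|w|_{n+d}=tc|y|_{n+d}\le s_n$, so $u\in\Pi_{\mathbf{s}}(X)$. Strong G\^ateaux differentiability in the direction $u$ tells us that $\rho_Y(f(x+t_n u)-f(x),t_n f'(x)u)/t_n\to 0$ for every sequence $t_n\downarrow 0$, which, since $f'(x)u=w$, is precisely the statement $w\in D'f(x,f(x))(\{u\})\subset D'f(x,f(x))(\Pi_{\mathbf{s}}(X))$.

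Once this inclusion is secured, Corollary~\ref{cor:compact} yields $f(\Pi_{\mathbf{s}}(X))\supset y_0+C=C\ni y$, so some $x\in\Pi_{\mathbf{s}}(X)$ satisfies $f(x)=y$, and by definition of $\Pi_{\mathbf{s}}(X)$ we have $\|x\|_n\le s_n=c|y|_{n+d}$. The main obstacle is really a matter of bookkeeping: the choice of $\mathbf{s}$ must absorb the \emph{loss of derivatives} (the shift by $d$) appearing in the hypothesis, and with the choice above it does, uniformly in $w\in C$. Because the heavy lifting---producing a preimage of $y$ inside $\Pi_{\mathbf{s}}(X)$---is already packaged inside Theorem~\ref{thm:kornerstone} and Corollary~\ref{cor:compact}, no further iteration scheme needs to be invoked at this stage.
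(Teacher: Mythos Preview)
Your proof is correct and rests on the same machinery as the paper's---Corollary~\ref{cor:compact} together with Proposition~\ref{pro:ci-comp}---but you take a slightly more direct path. The paper first re-indexes $Y$ to reduce to $d=0$ and then invokes Theorem~\ref{thm:main-pi-surj} to obtain full $\Pi$-surjectivity of $f$ on $(X,Y)$ with constant $c^{-1}$ (that is, the conclusion for \emph{all} $\mathbf{s}\in\mathbb{R}_+^\infty$ simultaneously), from which the claim for a given $y$ follows. You instead fix $y$ from the outset, absorb the shift $d$ into the single choice $\mathbf{s}=(c|y|_{n+d})_{n\ge 0}$, take $C=[0,y]$, and apply Corollary~\ref{cor:compact} once. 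Your route is marginally shorter for this theorem alone; the paper's detour through Theorem~\ref{thm:main-pi-surj} has the advantage of isolating $\Pi$-surjectivity as a stand-alone conclusion.
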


Comparing this statement to  \cite[Theorem~1]{ekeland}, we note that our assumptions on $X$ and the differentiability of $f$ are more restrictive. Nonetheless, they  include the most important cases of infinitely smooth functions on compacts. On the other hand, we do not bound the norms of the derivatives and do not require the existence of left inverse of these derivatives.

The most significant difference is that in the conclusion we have estimates for \textit{all} norms simultaneously.

Note that by a different method the above statement is proved for merely G\^ateaux differentiable function in \cite{iz-dban}.

\section{Preliminaries}\label{sec:prelim}

We recall for future use

\noindent \textbf{Ekeland variational principle} (e.g. Phelps~\cite[p.45]{Phelps}).
Let $f$ be a proper lower semi\-con\-ti\-nu\-ous function from a complete metric space $(M,\rho)$ into\linebreak 
$\mathbb{R}\cup\{+\infty\}$. Let $f$ be bounded below and $\eps > 0$, $\la >0$ and $\hat y$ be such that $f(\hat y)<+\infty$ and
$f(\hat y) \le \inf f + \varepsilon \la$. Then  there is $\hat x\in \mathrm{dom}\, f$
such that:
\begin{itemize}
	\item[\rm (\i)]
	$ \lambda \rho( \hat x, \hat y) \le f(\hat y) - f(\hat x)$,
	\item[\rm (\i\i)]
	$\rho( \hat x, \hat y)\le \varepsilon$, and
	\item[\rm (\i\i\i)]
	$ \lambda \rho( x,\hat x) + f(x) \ge f(\hat x)$ for all $x\in M$.
\end{itemize}

The next statement is a basic exercise in Functional Analysis.

\begin{lem}
	\label{lem:re-metric}
	Let $(X,\rho)$ be a locally convex space with shift-invariant metric $\rho$. Then, for each fixed $\bar x\in X\setminus \{0\}$ there is an equivalent shift-invariant metric $\bar\rho$ such that
	$$
		\bar\rho(0,t\bar x) = |t|,\quad\forall t\in\mathbb{R}.
	$$
\end{lem}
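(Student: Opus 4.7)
The strategy is to use Hahn--Banach to split $X$ as a topological direct sum along the line $\mathbb{R}\bar x$, and then to take a sum-type metric that forces the prescribed behaviour on this line while keeping the original topology on the complementary hyperplane.

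Concretely, since $X$ is locally convex and $\bar x\ne 0$, Hahn--Banach produces a continuous linear functional $\varphi\in X^*$ with $\varphi(\bar x)=1$, and the continuous linear map $r:X\to\ker\varphi$ defined by $r(x):=x-\varphi(x)\bar x$ yields the unique decomposition $x=\varphi(x)\bar x+r(x)$ in the topological direct sum $\mathbb{R}\bar x\oplus\ker\varphi$. I would then define
$$\bar\rho(x,y):=|\varphi(x-y)|+\rho(0,r(x-y)).$$
Shift-invariance, symmetry and the triangle inequality are immediate, using the linearity of $\varphi$ and $r$ together with the subadditivity of the F-norm $x\mapsto\rho(0,x)$. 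Non-degeneracy follows from the uniqueness of the decomposition, and the required identity is trivial: $r(t\bar x)=0$ and $\varphi(t\bar x)=t$, whence $\bar\rho(0,t\bar x)=|t|$.

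The substantive step is verifying that $\bar\rho$ and $\rho$ induce the same topology on $X$. Since both are shift-invariant, this reduces to comparing convergence to $0$. The implication $\rho(0,x_n)\to 0\Rightarrow\bar\rho(0,x_n)\to 0$ is immediate from the continuity of $\varphi$ and $r$. For the converse, $\bar\rho(0,x_n)\to 0$ supplies $\varphi(x_n)\to 0$ in $\mathbb{R}$ and $\rho(0,r(x_n))\to 0$; joint continuity of scalar multiplication in the TVS $X$ then yields $\rho(0,\varphi(x_n)\bar x)\to 0$, and the decomposition $x_n=\varphi(x_n)\bar x+r(x_n)$ together with the triangle inequality delivers $\rho(0,x_n)\to 0$. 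The only ingredient that is not purely formal is this last observation, which is where I rely on the ambient TVS structure (not merely on the shift-invariance of $\rho$); everything else is mechanical manipulation of the direct sum decomposition.
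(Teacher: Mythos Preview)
Your proof is correct and essentially identical to the paper's: both apply Hahn--Banach to obtain a continuous functional $\varphi$ with $\varphi(\bar x)=1$, set $\bar\rho(0,x)=|\varphi(x)|+\rho(0,x-\varphi(x)\bar x)$, and verify equivalence of the metrics via continuity of $\varphi$ in one direction and continuity of scalar multiplication in the other. The only cosmetic difference is that you name the projection $r$ explicitly and frame the construction as a topological direct sum, while the paper works directly with the formula.
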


\begin{proof}
	Let $\bar x\in X\setminus \{0\}$.
	
	Since $X$ is locally convex, by Hahn-Banach Theorem (see e.g. \cite{holms}) there is a continuous linear functional $p$ on $X$ such that $p(\bar{x})=1$. Define
	$$
		\bar\rho(0,x):= |p(x)|+\rho(0,x-p(x)\bar x),
	$$
	and $\bar\rho(x,y):=\bar\rho(0,x-y)$. Then
	 $\bar\rho$ is shift-invariant by construction and it is easy to check that $\bar\rho(0,t\bar x)=|p(t\bar x)|=|t||p(\bar x)|=|t|$.
	
	Since $p$ is continuous, it is clear that if $\rho(0,x_n)\to 0$ then $\bar\rho(0,x_n)\to 0$.
	
	Conversely,  if $\bar\rho(0,x_n)\to 0$ then from the definition of $\bar\rho$ it is clear that $p(x_n)\to 0$ and $\rho(0,x_n-p(x_n)\bar x)\to 0$.

 Since  $\rho(0,x_n)\le \rho(0,p(x_n)\bar x)+\rho(0,x_n-p(x_n)\bar x)$ and $\rho(0,p(x_n)\bar x)\to 0$ (because the scalar multiplication is continuous), $\rho(0,x_n)\to 0$. 
\end{proof}

\begin{dfn}
  For $\mathbf{s}\in \mathbb{R}_+^\infty$ set
\[    \supp \mathbf{s} : = \{n\ge 0:\ s_n > 0\},\]
 \[   |\mathbf{s}| : = \max_{n\ge 0} \frac{2^{-n}s_{n}}{1+s_{n}}.\]
  For a Fr\'echet space $(X,\|\cdot\|_n)$ and a given $\mathbf{s}\in \mathbb{R}_+^\infty$ also define (recall (\ref{eq:def:pi}))
\[  \Pi_{\mathbf{s}}(X):=\{x\in X:\ \|x\|_{n}\le s_{n},\ \forall n\ge 0\},\]
and
  \begin{equation}\label{eq:def:X:s}
    X_{\mathbf{s}}:=\bigcup_{t\ge 0}t\Pi_{\mathbf{s}}(X).
  \end{equation}
   \end{dfn}
We will now discuss some properties of the structure thus introduced to be used in the sequel.

What is obvious is that, if $x\in X$ and $\mathbf{s}=(\|x\|_n)_{i=0}^{\infty}$, then
\[  |\mathbf{s}| = \rho_X(0,x),\]
where $\rho_X$ is defined by \eqref{eq:rho}.

The following is one of the key relations in the approach we present.
\begin{lem}\label{lem:key}
  If $(X,\|\cdot\|_n)$ is Fr\'echet space and $\rho_X$ is defined as in \eqref{eq:rho}, then
  $$
    c\Pi_{\mathbf{s}}(X)\subset B_{\rho_X}(0; c |\mathbf{s}|)
  $$
  for any $\mathbf{s}\in \mathbb{R}_+^\infty$ and any $c\ge 1$.
\end{lem}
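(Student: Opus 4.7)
The plan is to unwind the definitions and reduce everything to a simple one-variable scalar inequality about the function $\varphi(t) := t/(1+t)$ on $[0,\infty)$.

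First I would take an arbitrary $x \in c\Pi_{\mathbf{s}}(X)$, so that $x/c \in \Pi_{\mathbf{s}}(X)$ and hence $\|x\|_n \le c s_n$ for every $n \ge 0$. Applying the definition of $\rho_X$ in \eqref{eq:rho} gives
\[
 \rho_X(0,x) \;=\; \max_{n\ge 0} \frac{2^{-n}\|x\|_n}{1+\|x\|_n} \;\le\; \max_{n\ge 0} \frac{2^{-n}\, c s_n}{1+c s_n},
\]
where the inequality uses that $\varphi$ is monotone non-decreasing on $[0,\infty)$.

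The next step is the key scalar estimate: for every $c \ge 1$ and every $t \ge 0$,
\[
 \frac{c t}{1+c t} \;\le\; c \cdot \frac{t}{1+t},
\]
which follows at once from $1+t \le 1+ct$ (equivalently $t \le c t$) after cross-multiplying. Applying this with $t=s_n$ term by term yields
\[
 \frac{2^{-n}\, c s_n}{1+c s_n} \;\le\; c\cdot \frac{2^{-n}\, s_n}{1+s_n}, \quad \forall n \ge 0,
\]
and taking the maximum over $n$ on the right produces $c|\mathbf{s}|$ by the definition of $|\mathbf{s}|$. Chaining the two inequalities yields $\rho_X(0,x) \le c|\mathbf{s}|$, which is exactly the claim $x \in B_{\rho_X}(0;c|\mathbf{s}|)$.

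There is no real obstacle here; the only delicate point is that the factor $c$ must sit outside the fraction in the target bound, which forces the hypothesis $c \ge 1$ and is precisely the content of the scalar inequality above. The monotonicity of $\varphi$ is used to pass from the bound $\|x\|_n \le c s_n$ to a bound on $\varphi(\|x\|_n)$, and the scalar inequality is used to extract the factor $c$.
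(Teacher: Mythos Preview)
Your proof is correct and follows essentially the same route as the paper: both reduce to the scalar inequality $g(ct)\le c\,g(t)$ for $g(t)=t/(1+t)$ and $c\ge 1$, combined with the monotonicity of $g$. The only cosmetic difference is that the paper derives the scalar inequality from the concavity of $g$ and $g(0)=0$ (noting it can also be checked directly), whereas you verify it by cross-multiplication.
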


\begin{proof}
  Fix arbitrary  $\mathbf{s}\in \mathbb{R}_+^\infty$ and $c\ge1$.

  Set
  $$
    g(t):=\frac{t}{1+t},\quad t\ge 0,
  $$
  so $\rho_X(0,x) = \max_{n\ge 0}2^{-n}g(\|x\|_n)$ and $|\mathbf{s}| = \max_{n\ge 0}2^{-n}g(s_n)$. 
  From the concavity of $g$ and $g(0)=0$ it follows that $g(t) = g(c^{-1}(ct))\ge c^{-1}g(ct)$, since $c^{-1}\in \,]0,1[\,$. Therefore,
  \begin{equation}\label{eq:gct}
    g(ct)\le cg(t),\quad\forall t\ge0,
  \end{equation}
  which, of course, can be checked just as well directly.

  If $x\in c\Pi_{\mathbf{s}}(X)$ then $\|x\|_n\le cs_n$ and then $g(\|x\|_n)\le g(cs_n)$, because $g$ is increasing. From \eqref{eq:gct} it follows that $g(\|x\|_n)\le cg(s_n)$ and, therefore, $\rho_X(0,x)= \max_{n\ge 0}2^{-n}g(\|x\|_n)\le \max_{n\ge 0}2^{-n}cg(s_n)\le c|\mathbf{s}|$. 
\end{proof}

It is easy to check that $\Pi_{\mathbf{s}}(X)$ is closed in $X$, because of
$$
  \lim_{k\to\infty}\rho_X(x_k,x)=0\iff \lim_{k\to\infty}\|x_k-x\|_n = 0,\ \forall n\ge 0,
$$
so the inequalities $\|\cdot\|_n\le s_n$ are preserved by $\rho_X$-convergence.

We will need the following estimate.

\begin{lem}\label{lem:diam:Pi:estimate}
Let $(X,\|\cdot\|_n)$ be a Fr\'echet space and $\rho_X$ be defined as in $\eqref{eq:rho}$.   Let $\mathbf{s}\in \mathbb{R}_+^\infty$. Then,
  $$
    \lim_{t\downarrow 0}\sup\rho_X(0,t\Pi_{\mathbf{s}}(X)) = 0.
  $$
\end{lem}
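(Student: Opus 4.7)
The plan is to reduce the supremum over $x\in\Pi_{\mathbf{s}}(X)$ to a supremum over indices, and then split that index supremum into a short head and a small tail.

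First, I would observe that for any $x\in \Pi_{\mathbf{s}}(X)$ and any $t>0$, one has $\|tx\|_n=t\|x\|_n\le ts_n$. With $g(u):=u/(1+u)$, which is nondecreasing on $[0,\infty)$, this gives
\[
\rho_X(0,tx)=\max_{n\ge 0}2^{-n}g(\|tx\|_n)\le \max_{n\ge 0}2^{-n}g(ts_n)=|t\mathbf{s}|.
\]
Thus it suffices to prove $|t\mathbf{s}|\to 0$ as $t\downarrow 0$. (Indeed the estimate is attained if $\Pi_{\mathbf{s}}(X)$ is rich enough, but I only need the inequality.)

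Next, fix $\varepsilon>0$. Choose $N\in\mathbb N$ with $2^{-N}\le\varepsilon$; since $g\le 1$, the tail satisfies
\[
\sup_{n>N}2^{-n}g(ts_n)\le 2^{-N}\le\varepsilon,\qquad \forall t>0.
\]
For the head, there are only finitely many numbers $s_0,\ldots,s_N$. By continuity of $g$ at $0$ with $g(0)=0$, I can pick $t_0>0$ small enough that $g(ts_n)\le\varepsilon$ for every $n\in\{0,\ldots,N\}$ and every $t\in(0,t_0]$; therefore
\[
\max_{0\le n\le N}2^{-n}g(ts_n)\le\varepsilon,\qquad \forall t\in(0,t_0].
\]
Combining the head and tail estimates yields $|t\mathbf{s}|\le\varepsilon$ for all $t\in(0,t_0]$, which is the desired conclusion.

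The main (minor) obstacle is that $\Pi_{\mathbf{s}}(X)$ is in general unbounded with respect to $\rho_X$ when some $s_n$ are large, so one cannot simply argue by scaling of a uniform bound; the key point is that the weighting factors $2^{-n}$ make the tail small regardless of the $s_n$'s, so only the finitely many head terms require $t$ to be small, and their continuous vanishing is immediate.
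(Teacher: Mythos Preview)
Your proof is correct and follows essentially the same head--tail splitting as the paper: bound $\rho_X(0,tx)$ by $\max_{n\ge 0}2^{-n}g(ts_n)$, control the tail $n>N$ by $2^{-N}$, and make the finitely many head terms small by taking $t$ small. The only cosmetic differences are that you phrase the reduction via $|t\mathbf{s}|$ and invoke continuity of $g$ at $0$, whereas the paper uses the cruder bound $g(u)\le u$ on the head; both arguments are equivalent.
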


\begin{proof}
  If $t>0$, then $x\in t \Pi_{\mathbf{s}}(X)\iff \|x\|_n\le ts_n$, $\forall n\ge 0$.

  Let $\varepsilon>0$. Fix $N>1/\varepsilon$. Since $2^{-N}<\varepsilon$, we have for all $x\in t \Pi_{\mathbf{s}}(X)$
  $$
    \rho_X(0,x) \le \max_{0\le n\le N}\frac{2^{-n}ts_n}{1+ts_n} + \varepsilon<t\max_{0\le n\le N}s_n + \varepsilon < 2\varepsilon
  $$
  for $t$ small enough. 
\end{proof}

\begin{lem}\label{lem:X_s:Banach}
Let $(X,\|\cdot\|_n)$ be a Fr\'echet space, let $\mathbf{s}\in \mathbb{R}_+^\infty$ be such that\linebreak  
$\supp \mathbf{s}\ne\emptyset$ and let $X_\mathbf{s}$ be defined by $(\ref{eq:def:X:s})$.
  For $x\in X_\mathbf{s}$   define
 \[   \|x\|_{\mathbf{s}}:=\sup \left\{\frac{\|x\|_{n}}{s_{n}}:\ n\in\supp \s\right\}.\]
  Then, $\Pi_{\mathbf{s}}(X)$ is the unit ball of the norm $\|\cdot \|_{\mathbf{s}}$ and $(X_{\mathbf{s}},\|\cdot\|_{\mathbf{s}})$ is a Banach space.
\end{lem}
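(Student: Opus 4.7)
The plan is to verify the four items in order: that $\|\cdot\|_{\mathbf{s}}$ is finite on $X_{\mathbf{s}}$, that it satisfies the norm axioms, that its closed unit ball is exactly $\Pi_{\mathbf{s}}(X)$, and that $(X_{\mathbf{s}},\|\cdot\|_{\mathbf{s}})$ is complete. The guiding observation is that every $x\in X_{\mathbf{s}}$ has the form $x=ty$ with $t\ge 0$ and $y\in \Pi_{\mathbf{s}}(X)$, so $\|x\|_n\le t s_n$ for every $n\ge 0$. In particular, for indices $n\notin\supp\mathbf{s}$ this forces $\|x\|_n=0$, a small but crucial consequence I would isolate right at the start: it shows that $X_{\mathbf{s}}$ consists of vectors that are invisible to the seminorms outside $\supp\mathbf{s}$, which lets the supremum in the definition of $\|\cdot\|_{\mathbf{s}}$ ignore those indices without losing information.

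For the norm axioms: finiteness is immediate from the inequality $\|x\|_n/s_n\le t$ on $\supp\mathbf{s}$; absolute homogeneity and the triangle inequality are routine from the analogous properties of each $\|\cdot\|_n$; and positive definiteness uses the previous observation together with the separating property of the family $\{\|\cdot\|_n\}$: if $\|x\|_{\mathbf{s}}=0$ then $\|x\|_n=0$ on $\supp\mathbf{s}$, while $\|x\|_n=0$ off $\supp\mathbf{s}$ automatically, so $x=0$. I would also note at this point that $X_{\mathbf{s}}$ is actually a linear subspace: $\Pi_{\mathbf{s}}(X)$ is balanced and the seminorm inequalities add, so $t_1\Pi_{\mathbf{s}}(X)+t_2\Pi_{\mathbf{s}}(X)\subset (t_1+t_2)\Pi_{\mathbf{s}}(X)$. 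The identification of the unit ball then follows: $\|x\|_{\mathbf{s}}\le 1$ gives $\|x\|_n\le s_n$ on $\supp\mathbf{s}$, and the off-support inequalities hold trivially, so $x\in\Pi_{\mathbf{s}}(X)$; the converse is direct.

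For completeness, given a $\|\cdot\|_{\mathbf{s}}$-Cauchy sequence $(x_k)$, the estimate $\|x\|_n\le s_n\|x\|_{\mathbf{s}}$ (for $n\in\supp\mathbf{s}$) together with the vanishing off $\supp\mathbf{s}$ shows that $(x_k)$ is Cauchy in every seminorm $\|\cdot\|_n$, hence $\rho_X$-Cauchy, hence converges to some $x\in X$. Boundedness of $\|x_k\|_{\mathbf{s}}$ by some $M$ places the sequence in $M\Pi_{\mathbf{s}}(X)$, which is $\rho_X$-closed (as already noted in the text, seminorm inequalities survive $\rho_X$-convergence), so $x\in M\Pi_{\mathbf{s}}(X)\subset X_{\mathbf{s}}$. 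Finally, fixing $k$ and passing to the limit $l\to\infty$ in the inclusion $x_k-x_l\in\varepsilon\Pi_{\mathbf{s}}(X)$ (again using $\rho_X$-closedness of $\varepsilon\Pi_{\mathbf{s}}(X)$) yields $\|x_k-x\|_{\mathbf{s}}\le\varepsilon$.

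I do not anticipate a serious obstacle; the only place that requires care is handling indices $n\notin\supp\mathbf{s}$, where the norm formula is silent but where one still needs to know $\|x\|_n=0$. Recording this at the beginning, as a direct consequence of membership in $X_{\mathbf{s}}$, streamlines every later step (positive definiteness, the unit-ball identification, and the passage from $\|\cdot\|_{\mathbf{s}}$-Cauchy to $\rho_X$-Cauchy).
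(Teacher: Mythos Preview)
Your proof is correct and follows essentially the same line as the paper's: verify the norm axioms and the unit-ball identification, pass from $\|\cdot\|_{\mathbf{s}}$-Cauchy to $\rho_X$-Cauchy, and use the $\rho_X$-closedness of scalar multiples of $\Pi_{\mathbf{s}}(X)$ both to place the limit in $X_{\mathbf{s}}$ and to upgrade to $\|\cdot\|_{\mathbf{s}}$-convergence. The only minor differences are that the paper obtains $\rho_X$-Cauchy by invoking Lemma~\ref{lem:diam:Pi:estimate} while you use the direct estimate $\|x\|_n\le s_n\|x\|_{\mathbf{s}}$, and that your handling of indices $n\notin\supp\mathbf{s}$ (needed for positive definiteness and for Cauchy in those seminorms) is spelled out explicitly rather than left implicit.
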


\begin{proof}
  Clearly, $\|\cdot\|_{\mathbf{s}}$ is a norm on $X_{\mathbf{s}}$ and that its unit ball  is $\Pi_{\mathbf{s}}(X)$.

To prove that $(X_{\mathbf{s}},\|\cdot\|_{\mathbf{s}})$ is a Banach space,  let us take a $\|\cdot\|_{\mathbf{s}}$-Cauchy
  sequence  $(x_{k})_{k=1}^{\infty}$   in $X_{\mathbf{s}}$, that is,
  \[
    \lim_{k,m\to\infty}\|x_{k}-x_{m}\|_{\mathbf{s}}=0.
  \]
  Then, $(x_{k})_{k=1}^{\infty}$ is $\|\cdot\|_{\mathbf{s}}$-bounded and by multiplying it by suitable positive constant we may assume without loosing generality that $(x_{k})_{k=1}^{\infty}\subset \Pi_{\mathbf{s}}(X)$.

  For any $t>0$ and for all $k$ and $m$ large enough $x_{k}-x_{m}\in t \Pi_{\mathbf{s}}(X)$. From Lemma~\ref{lem:diam:Pi:estimate} it follows that $(x_{k})_{k=1}^{\infty}$ is also $\rho_X$-Cauchy and, therefore, convergent in $(X,\rho_X)$. Since $\Pi_{\mathbf{s}}(X)$ is $\rho_X$-closed in $X$, there is $\bar x\in \Pi_{\mathbf{s}}(X)$ such that
  $$
    \lim_{k\to\infty}\rho_X(\bar x,x_k) = 0.
  $$
  Fix $\varepsilon > 0$ and $N\in \mathbb{N}$ such that $\|x_m-x_k\|_{\mathbf{s}}<\varepsilon$ for all $k>m>N$. Fix some $m>N$. The sequence $(x_m-x_k)_{k=m+1}^\infty$ is in the $\rho_X$-closed set $\varepsilon \Pi_{\mathbf{s}}(X)$ and $\rho_X$-converges to $x_m-\bar x$. This means that $x_m-\bar x\in \varepsilon \Pi_{\mathbf{s}}(X)$, or, equivalently, $\|x_m-\bar x\|_{\mathbf{s}}\le \varepsilon$ for arbitrary $m>N$. That is, $\|x_k-\bar x\|_{\mathbf{s}}\to 0$. 
\end{proof}

The next result is straightforward.

\begin{prop}\label{pro:ci-comp}
	Let $X$ be a compactly graded space. Let $X=\cap_0^\infty X_n$, where $(X_n,\|\cdot\|_n)$ are nested Banach spaces with compact embedding  $X_{n+1}\hookrightarrow X_n$. Then $(X,\|\cdot\|_n)$ is a Fr\'echet space and, moreover, $\Pi_{\mathbf{s}}(X)$ is compact for each $\mathbf{s}\in\mathbb{R}_+^\infty$.
\end{prop}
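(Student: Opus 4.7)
The proposition has two distinct assertions, and I would handle them in succession.

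For the Fr\'echet space claim, separation of the seminorms is essentially free: since $X\subset X_0$ and $\|\cdot\|_0$ is a genuine Banach norm on $X_0$, any $x\in X$ with $\|x\|_0=0$ already vanishes. For completeness under $\rho_X$, I would start from a $\rho_X$-Cauchy sequence $(x_k)$ and use the equivalence (already noted in the discussion before Lemma~\ref{lem:diam:Pi:estimate}) between $\rho_X$-convergence and coordinate-wise $\|\cdot\|_n$-convergence to conclude that $(x_k)$ is $\|\cdot\|_n$-Cauchy for every $n$. Each $X_n$ being Banach produces a limit $y_n\in X_n$, and continuity of the embedding $X_{n+1}\hookrightarrow X_n$ forces $y_{n+1}=y_n$. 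Hence there is a single $x\in\bigcap_n X_n=X$ with $\|x_k-x\|_n\to 0$ for every $n$, which gives $\rho_X(x_k,x)\to 0$.

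The compactness of $\Pi_{\mathbf{s}}(X)$ is the more substantive part, and my plan is to use a diagonal extraction. Given any sequence $(x_k)\subset\Pi_{\mathbf{s}}(X)$, it is bounded by $s_{n+1}$ in the stronger norm $\|\cdot\|_{n+1}$. The compact embedding $X_{n+1}\hookrightarrow X_n$ then yields a $\|\cdot\|_n$-convergent subsequence. Iterating, I obtain nested subsequences $(x_k^{(n)})$ such that $(x_k^{(n)})$ converges in $X_{n-1}$; the diagonal $(x_k^{(k)})$ is eventually a subsequence of each $(x_k^{(n)})$ and so converges in every $\|\cdot\|_n$ simultaneously.

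It remains to identify the limit. Letting $y^{(n)}\in X_n$ denote the $\|\cdot\|_n$-limit, continuity of the embeddings and uniqueness of $X_0$-limits forces $y^{(n)}=y^{(0)}=:y$ for all $n$, so $y\in\bigcap_n X_n=X$. The inequalities $\|x_k^{(k)}\|_n\le s_n$ pass to the limit under $\|\cdot\|_n$-convergence, placing $y$ in $\Pi_{\mathbf{s}}(X)$; and convergence in every $\|\cdot\|_n$ is precisely $\rho_X$-convergence. This establishes sequential compactness, hence compactness, of $(\Pi_{\mathbf{s}}(X),\rho_X)$. The only point requiring attention is the coherence of the various $X_n$-limits across the scale, but this coherence is exactly what the nested, continuously embedded structure of a compactly graded space supplies, so I anticipate no real obstacle here.
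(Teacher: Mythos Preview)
Your proof is correct. The completeness argument matches the paper's (which is very terse on this point). For compactness, however, you take a genuinely different route: you establish sequential compactness directly via a diagonal extraction, whereas the paper proves total boundedness by constructing a finite $\varepsilon$-net. Concretely, the paper truncates the metric $\rho_X$ to its first $k$ levels $\rho_k$ (choosing $k$ large so that $|\rho_X-\rho_k|<\varepsilon$ on $X$), observes that $\Pi_{\mathbf{s}}(X)\subset s_{k+1}B_{X_{k+1}}$ is relatively compact in $(X_k,\rho_k)$ by the compact embedding, takes a finite $\varepsilon$-net there, and pushes it back to a $3\varepsilon$-net in $(X,\rho_X)$. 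Your diagonal argument is arguably more transparent and avoids the explicit comparison of $\rho_X$ with its truncations; the paper's $\varepsilon$-net approach, on the other hand, makes the role of a \emph{single} compact embedding $X_{k+1}\hookrightarrow X_k$ at each scale of precision more visible. Both are standard and equally valid here.
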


\begin{proof}
	If a sequence is Cauchy in $X$ then it will be Cauchy and, therefore, convergent in each of $X_n$'s, so also convergent in $X$. Therefore, $X$ is complete.
	
	Clearly,
	$$
		\Pi_{\mathbf{s}} (X)= \bigcap_{n\ge 0} s_nB_{X_n}
	$$
	is closed. We will show that it has finite $\varepsilon$-net for each $\varepsilon>0$.
	
	To this end fix $\varepsilon>0$ and let $k$ be so large that for
	$$
		 \rho_k (x,y):= \max_{0\le n\le k}\frac{2^{-n}\|x-y\|_{n}}{1+\|x-y\|_{n}},\quad\forall x,y\in X_k,
	$$
	and $\rho_X$ defined by \eqref{eq:rho} it is fulfilled that
	\begin{equation}
		\label{eq:rho-rho-hat}
		|\rho_k(x,y)-\rho_X(x,y)|<\varepsilon,\quad\forall x,y\in X.
	\end{equation}
	Since $\|\cdot\|_k$ is stronger than $\|\cdot\|_1,\ldots,\|\cdot\|_{k-1}$, the metric $ \rho_k (x,y)$ defines the same topology on $X_k$. Since by assumption $s_{k+1}B_{X_{k+1}}$ is a compact subset of $X_k$, there is a finite $\varepsilon$-net to  $s_{k+1}B_{X_{k+1}}$ in $(X_k,\rho_k)$. That is, there are $a_1',\ldots,a_m'\in X_k$ such that
	$$
		s_{k+1}B_{X_{k+1}}\subset\bigcup_{i=1}^m B_{(X_k,\rho_k)}(a_i',\varepsilon).
	$$
	Since $\Pi_{\mathbf{s}}(X)\subset s_{k+1}B_{X_{k+1}}$, $a_1',\ldots,a_m'$ is an  $\varepsilon$-net to $\Pi_{\mathbf{s}}(X)$ in $(X_k,\rho_k)$. Let
	$$
		I:=\{ i\in \{1,\ldots,m\}:\ B_{(X_k,\rho_k)}(a_i',\varepsilon)\cap \Pi_{\mathbf{s}}(X)\neq\emptyset\}.
	$$

Obviously, $(a_i')_{i\in I}$ is a $\varepsilon$-net to $\Pi_{\mathbf{s}}(X)$ in $(X_k,\rho_k)$. For each $i\in I$ chose $a_i\in B_{(X_k,\rho_k)}(a_i',\varepsilon)\cap \Pi_{\mathbf{s}}(X)$. By triangle inequality $(a_i)_{i\in I}$ is a $2\varepsilon$-net to $\Pi_{\mathbf{s}}(X)$ in $(X_k,\rho_k)$. From \eqref{eq:rho-rho-hat} it follows that $(a_i)_{i\in I}$ is a $3\varepsilon$-net to $\Pi_{\mathbf{s}}(X)$ in $(X,\rho_X)$. 
\end{proof}

The following can be derived from \cite[Theorem 2.55]{Ioffe_book}, but we present a proof for reader's convenience.

\begin{lem}\label{lem:density}
  Let $(M_1,\rho_1)$ and $(M_2,\rho_2)$ be complete metric spaces. Let $F:M_1\rightrightarrows M_2$ be a multi-valued map with closed graph. Let $U\subset M_1$ and $V\subset M_2$ be open and such that $\grf F\cap (U\times V)\neq \emptyset$.

  Let for  some $\alpha>0$ it hold that
  $$
    \cl {F(B_{\rho_1}(x;r))}\supset B_{\rho_2}(y;\alpha r)\cap V,
  $$ for all $(x,y)\in\grf F\cap (U\times V)$ and $r<m_U(x):=\mathrm{dist}(x,M_1\setminus U)$.
  Then for  $\beta \in ]0,\alpha[$ it holds that
\[\begin{array}{l}
F(B_{\rho_1}(x;r))\supset B_{\rho_2}(y;\beta r)\cap V,\\[10pt]
\forall (x,y)\in\grf F\cap (U\times V),\ \forall r<m_U(x),
 \end{array}\]
that is, $F$ is restrictedly $\gamma$-open at linear rate  on $(U,V)$ for $\gamma(x)=m_U(x)$.
\end{lem}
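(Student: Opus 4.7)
The plan is a Lyusternik--Graves style iteration converting the hypothesized closure-openness at rate $\alpha$ into genuine openness at the strictly smaller rate $\beta$. Fix $(x,y)\in\gph F\cap(U\times V)$, a radius $r<m_U(x)$, and an arbitrary target $\bar y\in B_{\rho_2}(y;\beta r)\cap V$; the goal is to exhibit $\bar x\in B_{\rho_1}(x;r)$ with $\bar y\in F(\bar x)$. The case $\bar y=y$ is trivial, so assume $d_0:=\rho_2(\bar y,y)>0$. Because $\beta<\alpha$, I can fix $q\in(0,1)$ with $\beta/(\alpha(1-q))<1$; because $V$ is open, I can fix $\eps_0>0$ with $B_{\rho_2}(\bar y;\eps_0)\subset V$.

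I then build inductively a sequence $(x_k,y_k)\in\gph F\cap(U\times V)$ starting from $(x_0,y_0)=(x,y)$. Writing $d_k:=\rho_2(\bar y,y_k)$ and $s_k:=d_k/\alpha$, the point $\bar y$ lies in $B_{\rho_2}(y_k;\alpha s_k)\cap V$, hence, by the hypothesis applied at $(x_k,y_k)$, in $\cl F(B_{\rho_1}(x_k;s_k))$. Consequently, for any chosen tolerance there exists $(x_{k+1},y_{k+1})\in\gph F$ with $\rho_1(x_{k+1},x_k)\le s_k$ and $\rho_2(\bar y,y_{k+1})$ as small as required. I would pick this tolerance to be $\le\min(qd_k,\eps_0)$; this single choice both enforces the geometric contraction $d_{k+1}\le qd_k$ and secures $y_{k+1}\in V$.

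The main obstacle --- really the only non-routine point --- is the bookkeeping that keeps $(x_k,y_k)$ in $U\times V$ and keeps $s_k<m_U(x_k)$, so that the hypothesis remains applicable at every stage. It reduces to the geometric-series estimate
\[
   \rho_1(x,x_k)+s_k\;\le\;\sum_{j=0}^{k}\frac{d_j}{\alpha}\;\le\;\frac{d_0}{\alpha(1-q)}\;\le\;\frac{\beta r}{\alpha(1-q)}\;<\;r\;<\;m_U(x).
\]
This forces $B_{\rho_1}(x_k;s_k)\subset B_{\rho_1}(x;r)\subset U$, and combined with the triangle inequality $m_U(x_k)\ge m_U(x)-\rho_1(x,x_k)$ it also yields $s_k<m_U(x_k)$, closing the induction.

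Finally, since $\rho_1(x_{k+1},x_k)\le q^k d_0/\alpha$ is geometrically summable, $(x_k)$ is $\rho_1$-Cauchy and converges to some $\bar x$ with $\rho_1(\bar x,x)<r$; simultaneously $y_k\to\bar y$. The closed graph of $F$ then gives $\bar y\in F(\bar x)$, which is the required conclusion. The restatement as restricted $\gamma$-openness with $\gamma(x)=m_U(x)$ is just a rewording of what has been proved.
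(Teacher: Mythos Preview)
Your argument is correct. The iteration is well set up: the choice of $q$ with $\beta/(\alpha(1-q))<1$ is exactly what makes the geometric series $\sum s_j$ stay strictly below $r$, and the bookkeeping inequality you display does simultaneously guarantee $x_{k+1}\in U$ and $s_k<m_U(x_k)$, so the hypothesis remains applicable throughout. The closed-graph passage to the limit is routine.

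The paper, however, does \emph{not} iterate. It fixes the target $\xi\in B_{\rho_2}(y_0;\beta r)\cap V$, equips $M=(\overline U\times\overline V)\cap\gph F$ with the product metric $\rho((x,y),(u,v))=\max\{\rho_1(x,u),\alpha^{-1}\rho_2(y,v)\}$, and applies Ekeland's variational principle to $g(x,y)=\rho_2(y,\xi)$ with parameters $\lambda=\beta$, $\varepsilon=r$. This produces a point $(x_1,y_1)$ satisfying the Ekeland inequality; if $p:=\rho_2(y_1,\xi)>0$, one invokes the closure-openness hypothesis once at $(x_1,y_1)$ with radius $\alpha^{-1}p$ to manufacture a sequence $(u_k,v_k)$ violating that inequality, forcing $p=0$ and hence $\xi\in F(x_1)$. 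So the paper trades the explicit geometric iteration for a single application of a variational principle plus a contradiction argument. Your Lyusternik--Graves route is more elementary and self-contained; the Ekeland route is shorter once the principle is in hand and avoids tracking the recursive estimates by packaging them into the metric scaling $\alpha^{-1}\rho_2$.
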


\begin{proof}
  Fix arbitrary $(x_0,y_0)\in\grf F \cap (U\times V)$.

  Let $r>0$ be such that $r<m_U(x_0)$,
   so $B_{\rho _1}(x_0,r)\subset U$.
  It is clear that
  $$
    M := (\overline U \times \overline V)\cap \grf F
  $$
  is complete in the product metric
  $$
    \rho ((x,y),(u,v)) := \max\{ \rho_1(x,u),\alpha^{-1}\rho_2(y,v)\}.
  $$
 Take arbitrary $\beta \in ]0,\alpha[$.

  Fix arbitrary $\xi\in B_{\rho_2}(y_0;\beta r)\cap V$. Define $g:M\to\mathbb{R}$ by
  $$
    g(x,y) := \rho_{2}(y,\xi).
  $$
  Then $g\ge0$ and $g(x_0,y_0)\le\beta r$. By Ekeland Variational Principle (with $\la=\beta$ and $\varepsilon=r$) there is $(x_1,y_1)\in M$ such that
  $$
    \rho((x_1,y_1),(x_0,y_0))\le r \Rightarrow\rho_1(x_1,x_0)\le r,\ \rho_2(y_1,y_0)\le \alpha r,
  $$
    \begin{equation}\label{eq:Ekeland}
    g(x,y)-g(x_1,y_1) \ge -\beta \rho((x,y),(x_1,y_1)),\quad \forall (x,y)\in M,
  \end{equation}
  and
  $$
  \beta \rho((x_0,y_0),(x_1,y_1))\le \beta r-g(x_1,y_1)= \beta r -\rho_2(y_1,\xi).
  $$

  Set $ p:=\rho_2(y_1,\xi)$. From the latter we have that $p\le \beta (r-\rho_1(x_1,x_0))$.

 Suppose that $p>0$. Take $r'$ such that $r<r'<m_U(x_0)$. Then
 for any $x\in B_{\rho_1}(x_1;\alpha^{-1}p+r'-r)$ we have that
 \begin{eqnarray*}
 \rho_1(x,x_0)&\le &\rho_1(x,x_1)+\rho_1(x_1,x_0) \le \alpha^{-1}p +r'-r+ \rho_1(x_1,x_0)\\
                &\le&\alpha^{-1}\beta (r-\rho_1(x_1,x_0))+ r'-r+\rho_1(x_1,x_0)\\
                &=&\beta \alpha^{-1}r+(1-\beta \alpha^{-1})\rho_1(x_1,x_0)+r'-r\le r',
  \end{eqnarray*}
using that $   \rho_1(x_1,x_0)\le r$.

  So,  $B_{\rho_1}(x_1;\alpha^{-1}p+r'-r)\subset B_{\rho_1}(x_0;r')\subset U$ and hence $\alpha^{-1}p{+}r'{-}r{\le} m_U(x_1)$. Therefore, $\alpha^{-1}p< m_U(x_1)$.

  By  assumption, $\cl{F(B_{\rho_1}(x_1;\alpha^{-1}p))}\supset B_{\rho_2}(y_1;p)\cap V$.

  Since $\xi\in B_{\rho_2}(y_1;p)\cap V$, the latter implies $\xi\in \cl{F(B_{\rho_1}(x_1;\alpha^{-1}p))}$.

  This means that there is a sequence $(u_k,v_k)^\infty_1\in M$ such that $\rho_1(x_1,u_k)\le \alpha^{-1}p$ and $\rho_2(v_k,\xi)\to 0$. Thus we can substitute $(x,y)=(u_k,v_k)$ in \eqref{eq:Ekeland} to get
  $$
    \rho_2(v_k,\xi)-\rho_2(y_1,\xi) \ge -\beta\max\{ \rho_1(u_k,x_1),\alpha^{-1}\rho_2(v_k,y_1)\}.
     $$
     
  Taking into account that $\rho_2(v_k,\xi)\to 0$, $\rho_2(y_1,\xi)=p$ and\linebreak 
  $\rho_2(v_k,y_1)\to \rho_2(\xi,y_1)=p$, we get $-p \ge - \beta \alpha^{-1}p$. But $\beta <\alpha$, hence $p=0$ and we get a contradiction.

Therefore $p=0$, that is, $y_1=\xi$, so $\xi\in F(x_1)\subset F(B_{\rho_1}(x_0;r))$.

  Since $\xi\in B_{\rho_2}(y_0;\beta r)\cap V$ was arbitrary, we conclude that
  $$
    F(B_{\rho_1} (x_0;r))\supset B_{\rho_2}(y_0;\beta r)\cap V
  $$
and the proof is completed.  
 \end{proof}

\section{LOEV Principle}\label{sec:loev}

 In \cite{LOEV} we have established the so called \emph{Long Orbit or Empty  Value (LOEV) Principle}   and we have used it there for getting surjectivity results in Banach spaces.

Let $\left(M,\rho\right)$ be a complete metric space.

  Let $S:M\rightrightarrows M$ be a multi-valued map. We say that $S$
  \textbf{satisfies the condition} $\left(\ast\right)$ if $x\notin S(x)$, $\forall x\in M$,
  and whenever $y\in S(x)$ and $\lim_{n}x_{n}=x$, there are infinitely
  many $x_{n}$'s such that
  $
  y\in S(x_{n}).
  $

  The following slight modification of LOEV Principle  is proved here by a minor alteration of the original proof in \cite{LOEV}. We give this proof only for the sake of completeness.
  \begin{thm}\label{thm:loev-set-version}
Let $\left(M,\rho\right)$ be a complete metric space and let $S:M\rightrightarrows M$ satisfy~$\left(\ast\right)$.
    Let $M'\subset M$ be such that
    $$
      S(x)\ne\emptyset,\quad\forall x\in M'.
    $$
    If $x_0\in M$ then one of the conditions $(a)$, $(b1)$ and $(b2)$  below is true $((a)$ corresponds to infinite length orbit while $(b1)$ and $(b2)$ correspond to finite length orbit$)$.

    $(a)$ There are $x_{i}\in M$, $i=1,2,\ldots$, such that
    \[
    x_{i+1}\in S(x_{i}),\ i=0,1,\ldots;
    \quad
    \sum_{i=0}^{\infty}\rho(x_{i},x_{i+1})=\infty;
    \]

    $(b1)$ There are $x_{i}\in M$, $i=0,1,2,\ldots,n$, such that
    \[
    x_{i+1}\in S(x_{i}),\  i=0,1,\ldots,n-1;
    \quad
    x_{n}\not\in M';
    \]

    $(b2)$ There are $x_{i}\in M$, $i=1,2,\ldots$, such that
    \[
    x_{i+1}\in S(x_{i}),\ i=0,1,\ldots;
    \quad
    \sum_{i=0}^{\infty}\rho(x_{i},x_{i+1})<\infty,\quad \mbox {and}\quad x_i\to \ol x\not \in M'.
    \]
  \end{thm}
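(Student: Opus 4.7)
The plan is to build the orbit $(x_i)$ recursively starting from $x_0$ with a greedy choice of step size, and then separate cases depending on whether the construction terminates and, in the infinite case, whether the total length diverges.

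Concretely, at step $i$, if $x_i \notin M'$ (equivalently $S(x_i)=\emptyset$ is possible), I would stop and declare case $(b1)$ with this $n=i$. Otherwise $S(x_i)\neq\emptyset$, and I set
\[
r_i := \sup\{\rho(x_i,y):\ y\in S(x_i)\}\in (0,\infty],
\]
noting $r_i>0$ because $x_i\notin S(x_i)$ by $(\ast)$. I then pick $x_{i+1}\in S(x_i)$ so that $\rho(x_i,x_{i+1})\ge \min\{1,\,r_i/2\}$. This ``half of the best available jump'' rule is the key device, taken from the original LOEV argument.

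If the construction never stops, the orbit is infinite, and either $\sum_{i\ge 0}\rho(x_i,x_{i+1})=\infty$, which is case $(a)$, or the sum converges. In the latter case $(x_i)$ is Cauchy, hence converges by completeness of $M$ to some $\ol x\in M$; to place us in case $(b2)$ I only have to verify $\ol x\notin M'$. Assume for contradiction that $\ol x\in M'$. Then $S(\ol x)\neq\emptyset$, so pick $y\in S(\ol x)$; by $(\ast)$ one has $y\neq\ol x$, so $\rho(\ol x,y)>0$, and again by $(\ast)$ there is an infinite subset $\mathcal{N}\subset\mathbb{N}$ with $y\in S(x_n)$ for every $n\in\mathcal{N}$. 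Convergence of the series forces $\rho(x_i,x_{i+1})\to 0$, so the greedy rule gives $r_i\to 0$ (in particular $r_i$ is eventually finite and $r_i\le 2\rho(x_i,x_{i+1})$). But for $n\in\mathcal{N}$,
\[
r_n\ \ge\ \rho(x_n,y)\ \longrightarrow\ \rho(\ol x,y)\ >\ 0,
\]
contradicting $r_n\to 0$. Hence $\ol x\notin M'$, which is case $(b2)$.

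The only subtle point, and the step I expect to require the most care, is the passage from $y\in S(\ol x)$ to infinitely many $n$ with $y\in S(x_n)$: this uses condition $(\ast)$ in an essential way, and without it the greedy orbit could converge to a point still in $M'$. Everything else (terminating case, divergent-sum case, Cauchy-ness from summability) is routine once the greedy rule is in place.
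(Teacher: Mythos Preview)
Your proof is correct and follows essentially the same approach as the paper's own proof: build the orbit greedily by choosing $x_{i+1}\in S(x_i)$ with at least half of the supremal available jump (capped at $1$), then split into the terminating case (b1), the divergent-sum case (a), and the convergent-sum case, where condition $(\ast)$ is used exactly as you describe to show the limit $\ol x$ satisfies $S(\ol x)=\emptyset$ and hence $\ol x\notin M'$. The only cosmetic differences are that the paper defines $s_i=\min\{1,r_i\}$ up front rather than applying the cap inside the selection rule, and it terminates when $S(x_i)=\emptyset$ rather than when $x_i\notin M'$; both variants work.
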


  \begin{proof}
    We can construct finite or infinite orbit $( x_{i} )_{i\ge 0}\subset M$ by the following procedure.

  If $x_0, x_1,\ldots,x_i$ are already chosen, then

  either $S(x_i) = \emptyset$, which means that $x_i\not\in M'$: that is, (b1) is fulfilled;

  or $s_{i}:=\min\left\{ 1,\sup\left\{ \rho(x_{i},y):\,y\in S(x_{i})\right\} \right\} > 0$.
  Take $x_{i+1} \in S(x_i)$ such that
  \begin{equation}
  \rho(x_{i},x_{i+1})>\frac{s_{i}}{2}.\label{eq:half-sup}
  \end{equation}

  If we end up with infinite orbit then either (a) is fulfilled, or
  \[
  \sum_{i=0}^{\infty}\rho(x_{i},x_{i+1})<\infty\Rightarrow\lim_{i\to\infty}x_{i}=:\bar{x}.
  \]
  In the latter case, $\lim_{i}\rho(x_{i},x_{i+1})=0$ and from (\ref{eq:half-sup})
  it follows that $s_{i}\to 0.$

Assume that $S(\bar{x}) \neq \emptyset$.

  Take $\bar y\in S(\bar{x})$.
  By $\left(\ast\right)$ we have $\rho(\bar y,\bar{x})>0$
and $\bar y\in S(x_{i})$
  for infinitely many $i$'s. By the definition of $s_i$ we have that $s_{i}\ge \rho(\bar y,x_i)$ for infinitely
  many $i$'s. Passing to limit over the latter subsequence we get $0\ge \rho(\bar y,\bar x) > 0$.
  Contradiction. Hence $S(\ol x)=\emptyset$ and then $\ol x\not \in M'$. Thus, (b2) is fulfilled. 
\end{proof}

We apply LOEV principle to prove the following proposition that we will need in the sequel.

\begin{prop}\label{prop:4}
Let $(X,\|\cdot\|)$ be a Banach space and let $(Y,\rho)$
be a Fr\'echet space.
Let $G:X\rightrightarrows Y$ be a multi-valued map with closed graph and\linebreak 
$(x_0,y_0)\in \grf G$.  If for some $ \bar y\in Y$ and some open set $V\supset [y_0,y_0+\bar y]$
 there exist $\sigma >0$ and an open set $W\supset x_0+\sigma B_X$   such that
\[
\bar y\in D'G(x,y)(\sigma B_{X}),\quad\forall (x,y)\in \grf G \cap \left(W\times V\right),
\]
then
\[
y_0+\bar y\in\cl {G(x_0+\sigma B_{X})}.
\]
\end{prop}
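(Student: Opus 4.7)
The plan is to apply the LOEV Principle (Theorem~\ref{thm:loev-set-version}) to an iteration that tracks a progress parameter $\alpha\in[0,1]$ moving from $0$ toward $1$, while maintaining a preimage $(x,y)\in\grf G$ whose image stays within distance $\eta\alpha$ of the segment point $y_0+\alpha\bar y$. Fix a small $\eta>0$; since $[y_0, y_0+\bar y]$ is $\rho$-compact and contained in the open set $V$, we may choose $\eta$ so small that the closed $\eta$-neighbourhood of $[y_0, y_0+\bar y]$ lies in $V$. Consider
\[
  M := \{(x,y,\alpha)\in\grf G\times[0,1]:\ \|x-x_0\|\le\sigma\alpha,\ \rho(y, y_0+\alpha\bar y)\le\eta\alpha\}
\]
with a suitable product metric; since $\grf G$ is closed and the defining inequalities are closed, $M$ is a closed subset of $X\times Y\times[0,1]$, hence complete. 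The invariants force $x\in x_0+\sigma B_X\subset W$ and $y\in V$, so the hypothesis on $D'G$ applies throughout $M$. Put $M':=\{(x,y,\alpha)\in M:\ \alpha<1\}$, which contains the starting point $(x_0,y_0,0)$.

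Define
\[
  S(x,y,\alpha) := \left\{(x',y',\alpha')\in M:\ \alpha<\alpha',\ \|x'-x\|<\sigma(\alpha'-\alpha),\ \rho(y', y+(\alpha'-\alpha)\bar y)<\tfrac{\eta}{2}(\alpha'-\alpha)\right\}.
\]
The strict inequalities, together with continuity of each condition in the first argument, deliver property $(\ast)$. The central technical step is to establish $S(x,y,\alpha)\neq\emptyset$ for $(x,y,\alpha)\in M'$: from $\bar y\in D'G(x,y)(\sigma B_X)$ and the homogeneity $D'G(x,y)(tA)=tD'G(x,y)(A)$, one obtains $(1-\delta)\bar y\in D'G(x,y)((1-\delta)\sigma B_X)$ for any $\delta\in(0,1)$, and hence, by the definition of $D'G$, arbitrarily small $t>0$, $u$ with $\|u\|\le(1-\delta)\sigma<\sigma$, and $y'\in G(x+tu)$ with $\rho(y', y+t(1-\delta)\bar y)<\epsilon t$ for any preset $\epsilon>0$. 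For $\delta,\epsilon$ small and then $t\le 1-\alpha$ small enough, the triangle inequality combined with the estimate $\rho(0, t\delta\bar y)\to 0$ (from Lemma~\ref{lem:diam:Pi:estimate}) shows that the new triple $(x+tu, y', \alpha+t)$ meets all three strict inequalities and the $M$-invariants.

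Finally, apply LOEV starting from $(x_0,y_0,0)$. Case (a) is ruled out once the product metric on $M$ is chosen so that the orbit length is controlled by $\sum_k(\alpha_{k+1}-\alpha_k)\le 1$: the bounds $\|x_{k+1}-x_k\|<\sigma t_k$ and $|\alpha_{k+1}-\alpha_k|=t_k$ handle the $X$- and $\alpha$-components, while the $Y$-contribution -- which involves $\rho(0, t_k\bar y)$ and need not be linear in $t_k$ -- is tamed by first passing, via Lemma~\ref{lem:re-metric}, to an equivalent shift-invariant metric on $Y$ that is linear along the $\bar y$-ray. Hence case (b1) or (b2) occurs, with the terminal or limit point $(x^\ast,y^\ast,\alpha^\ast)\in M\setminus M'$; because $S(\cdot,\cdot,1)=\emptyset$, this forces $\alpha^\ast=1$. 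We obtain $(x^\ast,y^\ast)\in\grf G$ with $\|x^\ast-x_0\|\le\sigma$ and $\rho(y^\ast, y_0+\bar y)\le\eta$, and since $\eta>0$ was arbitrary, this yields $y_0+\bar y\in\cl{G(x_0+\sigma B_X)}$. The main obstacle is the coordinated choice of metric and strict inequalities: the strict $x$-inequality (needed for $(\ast)$) is secured via the homogeneity rescaling, while the metric swap on $Y$ is needed to rule out case (a), and both choices must remain compatible with the $D'G$ hypothesis.
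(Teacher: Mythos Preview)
Your approach is correct and uses the same two key ingredients as the paper---the LOEV principle (Theorem~\ref{thm:loev-set-version}) and the re-metrization of $Y$ along $\bar y$ (Lemma~\ref{lem:re-metric})---but packages them differently. The paper works on $M=\grf G\cap(\overline W\times\overline V)$, tracks progress \emph{implicitly} via $p_n=\sum_{i\le n}t_i$, and then has a somewhat delicate endgame: it argues that if $p_n\le 1-\varepsilon$ for all $n$ the orbit stays in $M'$ and (b2) yields a contradiction, so some $p_{n_0}>1-\varepsilon$; one then picks $m$ with $p_m\in(1-2\varepsilon,1-\varepsilon]$ and reads off the conclusion. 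Your version carries the progress parameter $\alpha$ \emph{explicitly} in the state space and bakes the invariants $\|x-x_0\|\le\sigma\alpha$, $\rho(y,y_0+\alpha\bar y)\le\eta\alpha$ into $M$, so that $M\setminus M'=\{\alpha=1\}$ and the endgame is immediate. This is a genuine streamlining; the price is a slightly heavier state space.

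One point needs tidying. To verify the $S$-inequality $\rho(y',y+t\bar y)<\tfrac{\eta}{2}t$ you bound $\rho(y',y+t\bar y)\le \epsilon t+\rho(0,t\delta\bar y)$ and then invoke ``$\rho(0,t\delta\bar y)\to 0$ (from Lemma~\ref{lem:diam:Pi:estimate})''. That lemma gives only $o(1)$, not the $O(t)$ rate you need; and for an arbitrary shift-invariant metric the ratio $\rho(0,t\delta\bar y)/t$ need not stay bounded. The fix is exactly the one you mention later: apply Lemma~\ref{lem:re-metric} \emph{at the outset} (as the paper does) so that $\rho(0,t\delta\bar y)=t\delta$; then $\epsilon+\delta<\eta/2$ suffices, and the same re-metrized $\rho$ simultaneously gives $\rho(y_{k+1},y_k)\le(1+\tfrac{\eta}{2})t_k$, ruling out case~(a). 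In short, the re-metrization is needed both for non-emptiness of $S$ and for bounding the orbit length, so it should precede the definition of $M$ rather than appear as an afterthought.
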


\begin{proof}
Obviously, we may assume that $\bar y\ne 0$. From Lemma~\ref{lem:re-metric} it follows that there is no loss of generality if we assume that the metric $\rho$ on $Y$ is such that
\begin{equation}
	\label{eq:t-bar-y}
	\rho(0,t\bar y) = |t|,\quad\forall t\in\mathbb{R}.
\end{equation}

Fix arbitrary $\varepsilon_1 \in \, ]0,1[\,$.

Since $[y_0,y_0+\bar y]$ is compact, there is $\varepsilon\in\,]0,\varepsilon_1/2[$  such that
\begin{equation}\label{eq:def:eps}
  [y_0,y_0+\bar y]+B_\rho (0;\varepsilon)\subset V.
\end{equation}

Fix one such $\varepsilon$ and take $\mu > \sigma$ such that $\mu (1-\varepsilon)<\sigma$.

Let
$$
  M := \grf G \cap \left(\overline{W}\times\overline V\right).
$$
Then $M$ is complete in the product metric
$$
  \hat \rho((x,y),(u,v)):= \|x-u\|+\rho(y,v).
$$
Define $S:M\rightrightarrows M$ by
\[
S(x,y)=\left\{ (u,v)\in M\setminus\left\{ (x,y)\right\} :\,\exists t\in(\mu^{-1}\|u-x\|,\varepsilon):\ \rho(v-y,t\bar y)<\varepsilon t\right\} .
\]

We claim that $S(x,y)\neq\emptyset$ for all $(x,y)\in M'$, where
$$
  M' := \grf G \cap \left( W\times V\right).
$$

Indeed, fix $(x,y)\in M'$. Since by assumption
$\bar y\in D'G(x,y)(\sigma B_{X})$, by definition there are $h_n\in X$ such that $\|h_n\|\le\sigma $, $z_n\in G(x+s_nh_n)$ and $s_n\downarrow 0$ such that
\[
\frac{\rho(z_n, y+s_n\ol y)}{s_n}\to 0,\mbox{ as }n\to\infty.
\]
So, for $n$ large enough, $s_n\in\,]0,\varepsilon[$, $x+s_nh_n \in W$, and   $\rho(z_n-y,s_n\bar y)<\varepsilon s_n$.
Also, $s_n\sigma \ge\|s_nh_n\|=\|(x+s_nh_n)-x\|$. Therefore, $s_n\mu>\|(x+s_nh_n)-x||$, so $(x+s_nh_n,z_n)\in S(x,y)$ for all large enough $n$.

By definition $(x,y)\notin S(x,y)$. The other requirement of $(\ast)$ follows by continuity:
if $(u,v)\in S(x,y)$ and $(u_{n},v_n)\to (x,y)$ then for the $t$ corresponding
to $(u,v)$ in the definition of $S(x,y)$, we will have $t\in(\mu^{-1}\|u-u_{n}\|,\varepsilon)$ and
$\rho(v-v_n-t\bar y,0)<\varepsilon t$
for $n$ large enough.

Thus $S$ satisfies $(\ast)$. Therefore, there is a $S$-orbit starting at $(x_0,y_0)$ which satisfies (a), (b1), or (b2) from  Theorem~\ref{thm:loev-set-version}. Denote this orbit by
$$
  \{(x_i,y_i)\}_{i\in I},\quad\mbox{where } (x_{i+1},y_{i+1})\in S(x_i,y_i).
$$
For each $n\in \mathbb N$ such that $\{0,1,\ldots,n+1\}\subset I$ set
$$
  p_n := \sum_{i=0}^nt_i
$$
and note that from the definition of $S$ we immediately get
\begin{equation}\label{eq:s:t}
 \sum_{i=0}^{n}\|x_{i+1}-x_{i}\|\le \mu\sum_{i=0}^{n}t_i=\mu p_n.
\end{equation}
On the other hand,
\[
\rho\left(y_{n+1}-y_0,p_n\bar y\right)=\rho\left(\sum_{i=0}^{n}(y_{i+1}-y_{i}),\left(\sum_{i=0}^{n}t_{i}\right)\bar y\right).
\]
From shift-invariance of the metric $\rho$ it follows that
\begin{eqnarray*}
\rho\left(y_{n+1}-y_0,p_n\bar y\right) & = & \rho\left(0,\sum_{i=0}^{n}(y_{i+1}-y_{i}-t_{i}\bar y)\right)\\
 & \le & \sum_{i=0}^{n}\rho(0,y_{i+1}-y_{i}-t_{i}\bar y)\\
 & = & \sum_{i=0}^{n}\rho(y_{i+1}-y_{i},t_{i}\bar y)\\
 & \le & \varepsilon\left(\sum_{i=0}^{n}t_{i}\right)=\varepsilon p_n.
\end{eqnarray*}
So,
\begin{equation}\label{eq:y:smaller}
  \rho\left(y_{n+1}-y_0,p_n\bar y\right)\le \varepsilon p_n.
\end{equation}
If $p_n\le1-\varepsilon$ then from \eqref{eq:s:t} it follows that
\begin{equation}\label{dob}
\|x_{n+1}-x_0\|\le \sum_{i=0}^{n}\|x_{i+1}-x_{i}\|\le \mu \sum _{i=0}^n t_i=\mu p_n\le \mu(1-\varepsilon)<\sigma
 \end{equation}
and $x_{n+1}\in B_X^\circ(x_0;\sigma)$; while from \eqref{eq:y:smaller} it follows that
\begin{equation}\label{doby}
y_{n+1}\in B_\rho(y_0+p_n\bar y;\varepsilon p_n).
 \end{equation}

 Since $y_0+p_n\bar y\in [y_0,y_0+\bar y]$, because $p_n\in [0,1]$, from \eqref{eq:def:eps} it follows that $y_{n+1}\in V$. Thus we see that
$$
  p_n\le 1-\varepsilon\Rightarrow (x_{n+1},y_{n+1})\in M'.
$$

Assume that $p_n\le 1-\varepsilon$ for all $n$ such that $\{0,1,\ldots,n+1\}\subset I$. Then from the above implication it follows that the orbit is contained in $M'$. Recalling the way it was chosen from LOEV Principle, the orbit must then satisfy (a) or (b2) from Theorem~\ref{thm:loev-set-version}. In particular the orbit is infinite, thus $I=\{0\}\cup\mathbb{N}$. From \eqref{eq:s:t} it follows that $\sum_{i=0}^\infty \|x_{i+1}-x_i\|\le \mu(1-\varepsilon)<\infty$. On the other hand, $|\rho(y_{i+1},y_i)-\rho(0,t_i\bar y)|\le \rho(y_{i+1}-y_i,t_i\bar y)\le\varepsilon t_i$. Since
$\rho(0,t_i\bar y) = t_i$ (see  \eqref{eq:t-bar-y}), we get $\rho(y_{i+1},y_i)\le (1+\varepsilon)t_i$. Therefore, $\sum_{i=0}^\infty \rho(y_{i+1},y_i) \le  (1+\varepsilon)\sum_{i=0}^\infty t_i \le 1-\varepsilon^2$. That is, the orbit's length $\sum_{i=0}^\infty \hat\rho((x_i,y_i),(x_{i+1},y_{i+1}))$ is finite. Hence the orbit fulfills (b2) from Theorem~\ref{thm:loev-set-version}. Then $(x_i,y_i)\to (x',y')$, and
\begin{equation}\label{contr}
(x',y')\not\in M'.
 \end{equation}

 Since $p_n$ is monotone increasing and bounded, $p_n\to p'$ as $n\to \infty$. Moreover, $p'\in [0,1-\varepsilon]$ and $y_0+p'\ol y\in [y_0,y_0+\ol y]$.

For sufficiently large $n$, $\rho (y', y_{n+1})<\varepsilon^2/2$ and $\rho (y_0+p_n\ol y, y_0+p'\ol y )=\rho (0,(p'-p_n)\ol y)<\varepsilon ^2/2$. Using these and  \eqref{doby} we get
\begin{eqnarray*}
\rho( y',y_0+p'\ol y )&\le  &\rho (y', y_{n+1})+\rho (y_{n+1}, y_0+p_n\ol y)+
\rho (y_0+p_n\ol y, y_0+p'\ol y )\\
&\le &
\varepsilon^2/2+\varepsilon(1-\varepsilon)+\varepsilon^2/2=\varepsilon.
\end{eqnarray*}
Then from \eqref{eq:def:eps}, we have that $y'\in V$.  Passing to limit in \eqref{dob} we obtain that $x'\in x_0+\sigma B_X$, so $x'\in W$. Since $G$ has a closed graph, $(x',y')\in \grf G$. Finally, $(x',y')\in M'$.  This contradicts \eqref{contr}.

Hence the assumption is false and there must be some $n_0\in \N$ such that $p_{n_0}>1-\varepsilon$.

As all $t_i\in \,]0,\varepsilon[$, there is $m\le n_0$ such that
\begin{equation}\label{eq:m:def}
  p_m = \sum_{i=0}^{m}t_{i}\in\,]1-2\varepsilon,1-\varepsilon].
\end{equation}

It easily follows (see \eqref{dob})  that
$\|x_{m+1}-x_0\|\le\mu p_m\le \mu(1-\varepsilon) < \sigma$.
That is,
\begin{equation}\label{eq:x:m+1}
    x_{m+1}\in x_0+\sigma B_X.
\end{equation}
On the other hand,
$$
  \rho(y_{m+1}-y_0,\bar y) \le \rho(y_{m+1}-y_0,p_m\bar y) + \rho(p_m\bar y,\bar y),
$$
and from  \eqref{eq:y:smaller} and \eqref{eq:m:def} it follows that $\rho(y_{m+1}-y_0,p_m\bar y)\le \varepsilon p_m\le \varepsilon < \varepsilon_1$; while from  \eqref{eq:t-bar-y} and \eqref{eq:m:def} it follows that $\rho(p_m\bar y,\bar y) = (1-p_m) <2\varepsilon< \varepsilon_1$.
Therefore,
\begin{equation}\label{eq:y:m+1}
    y_{m+1}\in B_\rho ^\circ (y_0+\bar y;2\varepsilon_1).
\end{equation}
From \eqref{eq:x:m+1} and \eqref{eq:y:m+1} it follows that
\[  \inf \rho\left(G(x_0+\sigma B_{X}),y_0+\bar y\right)<2\varepsilon_1.\]
Since $\varepsilon_1>0$ was arbitrary, the claim follows.
\end{proof}

\section{Proofs of the Main Results}\label{sec:proofs}

\setcounter{thm}{1}

Here we prove the results stated in Section~\ref{statements}.

\begin{thm}\label{thm:kornerstone}
  Let $(X,\|\cdot\|_n)$ and $(Y,|\cdot|_n)$ be Fr\'echet spaces and let
  $$
    F:X\rightrightarrows Y
  $$
  be a multi-valued map with closed graph.

  Let $U\subset X$ and $V\subset Y$ be open and such that $\gph F\cap (U\times V)\neq \emptyset$.

  Assume that for some $\mathbf{s}\in \mathbb{R}_+^\infty$ and some non-empty set $C\subset Y$ it holds that
 \begin{equation}\label{eq:main:th2:k2}
    D'F(x,y)(\Pi_{\mathbf{s}}(X))\supset  C,\quad \forall (x,y)\in \left( U\times V\right) \cap \grf F.
   \end{equation}
  Then for  all $(x,y)\in\grf F$ such that $x+\Pi_{\mathbf{s}}(X)\subset U$, and $[y,y+C]\subset V$,
  \begin{equation}\label{eq:main:th2:k3}
    \cl{F(x+\Pi_{\mathbf{s}}(X))}\supset y + C.
  \end{equation}
\end{thm}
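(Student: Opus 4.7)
The plan is to reduce the problem to a single application of Proposition~\ref{prop:4} by lifting $F$ to a map out of the Banach space $(X_{\mathbf{s}},\|\cdot\|_{\mathbf{s}})$ of Lemma~\ref{lem:X_s:Banach}, whose closed unit ball is precisely $\Pi_{\mathbf{s}}(X)$. Fix an arbitrary $(\hat x,\hat y)\in\grf F$ with $\hat x+\Pi_{\mathbf{s}}(X)\subset U$ and $[\hat y,\hat y+C]\subset V$, and a target $\bar y\in C$; the task is to prove $\hat y+\bar y\in\cl{F(\hat x+\Pi_{\mathbf{s}}(X))}$. The edge case $\supp\mathbf{s}=\emptyset$ gives $\Pi_{\mathbf{s}}(X)=\{0\}$ and is essentially trivial, so we may assume $(X_{\mathbf{s}},\|\cdot\|_{\mathbf{s}})$ is a genuine Banach space.

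First I translate and restrict: define $G:X_{\mathbf{s}}\rightrightarrows Y$ by $G(u):=F(\hat x+u)$. By Lemma~\ref{lem:key}, the embedding $X_{\mathbf{s}}\hookrightarrow(X,\rho_X)$ is continuous; hence $\grf G$, being the preimage of the closed set $\grf F$ under the continuous map $(u,v)\mapsto(\hat x+u,v)$, is closed in $X_{\mathbf{s}}\times Y$. Likewise, $W:=\{u\in X_{\mathbf{s}}:\hat x+u\in U\}$ is open in $X_{\mathbf{s}}$ and, by our assumption on $\hat x$, contains the closed unit ball $\Pi_{\mathbf{s}}(X)$. The set $V$ itself already satisfies $V\supset[\hat y,\hat y+\bar y]$. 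Finally, for any $(u,v)\in\grf G\cap(W\times V)$ one has $(\hat x+u,v)\in\grf F\cap(U\times V)$, and unwinding the definition of the contingent variation shows $D'G(u,v)(\Pi_{\mathbf{s}}(X))=D'F(\hat x+u,v)(\Pi_{\mathbf{s}}(X))$, so assumption~(\ref{eq:main:th2:k2}) delivers $\bar y\in C\subset D'G(u,v)(\Pi_{\mathbf{s}}(X))$.

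Applying Proposition~\ref{prop:4} at $(0,\hat y)\in\grf G$ with $\sigma=1$ then yields $\hat y+\bar y\in\cl{G(\Pi_{\mathbf{s}}(X))}=\cl{F(\hat x+\Pi_{\mathbf{s}}(X))}$, and since $\bar y\in C$ was arbitrary, (\ref{eq:main:th2:k3}) follows. The one point that requires care is the interplay of two topologies on $X_{\mathbf{s}}$: the Banach norm $\|\cdot\|_{\mathbf{s}}$ needed for Proposition~\ref{prop:4}, and the Fréchet metric $\rho_X$ in which $\grf F$ is closed and $U$ is open. Lemma~\ref{lem:key} supplies precisely the continuity $X_{\mathbf{s}}\hookrightarrow(X,\rho_X)$ that transports closedness of $\grf F$ and openness of $U$ into the Banach setting; after this observation, everything else is bookkeeping.
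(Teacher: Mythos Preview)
Your proof is correct and follows essentially the same route as the paper: translate $F$ to $G(u)=F(\hat x+u)$ on the Banach space $(X_{\mathbf{s}},\|\cdot\|_{\mathbf{s}})$ and invoke Proposition~\ref{prop:4} with $\sigma=1$, $W=(U-\hat x)\cap X_{\mathbf{s}}$. The only minor remark is that the continuity of the embedding $X_{\mathbf{s}}\hookrightarrow(X,\rho_X)$ is more directly supplied by Lemma~\ref{lem:diam:Pi:estimate} than by Lemma~\ref{lem:key} (the latter only handles $c\ge1$), but the fact itself is of course correct and is exactly what the paper uses as well.
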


\begin{proof}
  Fix $(x_0,y_0) \in (U\times V)\cap\grf F$ such that $x_0+  \Pi_{\s}(X){\subset }U$ and $[y_0,y_0{+} C]{\subset} V$.

  Obviously, it is enough to prove that
  \begin{equation}\label{eq:korn:1}
    y_0+\bar y\in\cl{F(x_0+\Pi_{\s}(X))}
  \end{equation}
  for each fixed $\bar y \in C$.

  We will apply Proposition~\ref{prop:4} to the Banach space $(X_{\s},\|\cdot\|_{\s})$, the metric space $(Y,\rho_Y)$ and the map $G:X_\s\rightrightarrows Y$ defined by
  $$
    G(x) := F(x_0 + x), \quad\forall x\in X_\s
  $$
  at the point $(0,y_0)\in\grf G$, while $\bar y$ will play the same role.

  Obviously, $[y_0,y_0{+}\bar y]= y_0+[0,1]\bar y\subset V$ and we will now translate and check the other assumptions of Proposition~\ref{prop:4}. In our case $\sigma$ will be equal to  $1$ and the set $W$ will be $W:=(U-x_0)\cap X_{\s}$. It is clear that $W$ is open in $X_{\s}$ (since $\|\cdot\|_{\s}$-topology is stronger than $\rho_X$-topology). Since $x_0+  \Pi_{\s}(X)\subset U$, we also have $W\supset  \Pi_{\s}(X) = B_{X_\s}$.

  Since $B_{X_{\s}}=\Pi_{\s}(X)$, we have by definition
  $$
    D'G(x,y)(B_{X_{\s}}) = D'F(x_0+x,y) (\Pi_{\s}(X)),\quad \forall (x,y)\in \grf G.
  $$
  If $y\in V$ and $x\in B_{X_{\s}}=\Pi_{\s}(X)$, so  $x_0+x\in U$, we have by \eqref{eq:main:th2:k2} that $D'F(x_0+x,y) (\Pi_{\s}(X)) \supset C$. Since $\bar y\in C$ we have, therefore,
  $$
   \bar y\in D'G(x,y)( B_{X_{\s}}),\quad\forall (x,y)\in \grf G\cap (W\times V).
  $$
 Since the assumptions of Proposition~\ref{prop:4} are satisfied, we apply it to get
  $$
   y_0+\bar y\in \cl{G( B_{X_{\s}})} = \cl{F(x_0+\Pi_\s(X))},
  $$
  which is \eqref{eq:korn:1} and the proof is completed. 
\end{proof}

\begin{cor}\label{cor:compact}
  Let the assumptions of Theorem~$\ref{thm:kornerstone}$ hold and let, moreover, $\Pi_{\mathbf{s}}(X)$ be compact.

 Then for  all $(x,y)\in\grf F$, $x+\Pi_{\s}(X)\subset  U$, and    $[y,y+C]\subset V$,
  \[
 {F(x+\Pi_{\mathbf{s}}(X))}\supset y + C.
  \]
\end{cor}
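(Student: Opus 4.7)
The plan is to reduce the closure in Theorem~\ref{thm:kornerstone} to an actual containment by exploiting the compactness of $\Pi_{\mathbf{s}}(X)$ together with the closed-graph assumption on $F$. Indeed, Theorem~\ref{thm:kornerstone} already gives us the inclusion $\cl F(x+\Pi_{\mathbf{s}}(X))\supset y+C$, so all that remains is to verify that $F(x+\Pi_{\mathbf{s}}(X))$ contains $y+C$ without the closure operation.

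First, I would fix an arbitrary $\bar y\in C$ and produce a sequence realizing $y+\bar y$ as a limit of points in $F(x+\Pi_{\mathbf{s}}(X))$. By Theorem~\ref{thm:kornerstone}, there exist $h_k\in \Pi_{\mathbf{s}}(X)$ and $y_k\in F(x+h_k)$ such that $\rho_Y(y_k,y+\bar y)\to 0$. Now I would invoke the compactness of $\Pi_{\mathbf{s}}(X)$ in $(X,\rho_X)$ to extract a subsequence (which I continue to denote by $h_k$) converging in $\rho_X$ to some $h^\ast\in \Pi_{\mathbf{s}}(X)$. Consequently, $(x+h_k,y_k)\to (x+h^\ast,y+\bar y)$ in the product topology of $(X,\rho_X)\times (Y,\rho_Y)$.

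Since $\grf F$ is closed and each pair $(x+h_k,y_k)$ lies in $\grf F$, the limit pair $(x+h^\ast,y+\bar y)$ also belongs to $\grf F$. This gives $y+\bar y\in F(x+h^\ast)\subset F(x+\Pi_{\mathbf{s}}(X))$. Because $\bar y\in C$ was arbitrary, the desired inclusion follows.

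The only point requiring any care is to make sure that the hypothesis ``closed graph'' of the theorem is understood with respect to the product of the Fr\'echet topologies on $X$ and $Y$ (equivalently, with respect to $\rho_X\times\rho_Y$), which is exactly the topology in which our sequence converges; this is the same topology used throughout the paper. There is no serious obstacle here --- the argument is essentially a one-step compactness-plus-closed-graph passage to the limit.
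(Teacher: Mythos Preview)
Your proof is correct and follows essentially the same argument as the paper: fix $\bar y\in C$, use Theorem~\ref{thm:kornerstone} to get an approximating sequence in the graph, extract a convergent subsequence via compactness of $\Pi_{\mathbf{s}}(X)$, and pass to the limit using the closed-graph assumption. The only cosmetic difference is notation ($h_k$ versus $x_k$).
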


\begin{proof}
  Fix $\bar y \in C$.

  From the conclusion of Theorem~\ref{thm:kornerstone} (see \eqref{eq:main:th2:k3}) it follows that there exist $x_k\in\Pi_\s(X)$ and $y_k\in F(x+x_k)$ such that
  $
    y_k\to y+\bar y\mbox{ as }k\to\infty.
  $

  Since $\Pi_\s(X)$ is compact, there is a subsequence $(x_{k_m})_{m=1}^\infty$ such that\linebreak $\lim_{m\to\infty}x_{k_m}=\bar x\in \Pi_\s(X)$.

  Thus, $x+x_{k_m}\to x+\bar x$, as $m\to\infty$, and since $\grf F$ is closed, $(x+\bar x, y+\bar y)\in\grf F$. 
\end{proof}

\setcounter{thm}{6}

\begin{thm}  \label{thm:main-pi-surj}
  Let $(X,\|\cdot\|_n)$ and $(Y,|\cdot|_n)$ be Fr\'echet spaces and let
  $$
    F:X\rightrightarrows Y
  $$
  be a multi-valued map with closed graph.

  Let $U\subset X$ be open and let  $V\subset Y$ be open and convex. Assume that for some $\kappa>0$
  \begin{equation}\label{eq:main:th7:2}
    D'F(x,y)(\Pi_{\mathbf{s}}(X))\supset  \kappa\Pi_{\mathbf{s}}(Y),\quad \forall (x,y)\in \left( U\times V\right) \cap \grf F,\ \forall \mathbf{s}\in \mathbb{R}_+^\infty.
  \end{equation}
  Then $F$ is weakly $\Pi$-surjective on $(U,V)$ with constant $\kappa$.

If, moreover,  $\Pi_{\mathbf{s}}(X)$  are compact for all $\mathbf{s}\in \mathbb{R}_+^\infty$, then $F$ is $\Pi$-surjective on $(U,V)$ with constant $\kappa$.
\end{thm}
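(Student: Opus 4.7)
The plan is to deduce the theorem directly from Theorem~\ref{thm:kornerstone} (and, for the second claim, Corollary~\ref{cor:compact}) by choosing the set $C$ appropriately at each point. The point is that the assumption \eqref{eq:main:th7:2} is uniform over all $\s\in \mathbb{R}_+^\infty$, so for any prescribed $(x_0,y_0)$ and $\s$, we already have the contingent-variation inclusion with the candidate $C$ we need; the only thing that Theorem~\ref{thm:kornerstone} additionally requires is $[y_0,y_0+C]\subset V$, and this is where convexity of $V$ will be used.

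Concretely, I would fix $(x_0,y_0)\in (U\times V)\cap\grf F$ and $\s\in \mathbb{R}_+^\infty$ with $x_0+\Pi_\s(X)\subset U$, and take
\[
  C := \bigl(\kappa\Pi_\s(Y)\bigr)\cap (V-y_0).
\]
If $C=\emptyset$ the required inclusion $\cl{F(x_0+\Pi_\s(X))}\supset \{y_0+\kappa\Pi_\s(Y)\}\cap V$ is vacuous, so assume $C\neq\emptyset$. For any $c\in C$, the point $y_0+c$ belongs to $V$, and since $y_0\in V$ and $V$ is convex, the whole segment $\{(1-t)y_0+t(y_0+c):t\in[0,1]\} = \{y_0+tc:t\in[0,1]\}$ lies in $V$; hence $[y_0,y_0+C]\subset V$. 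Because $C\subset \kappa\Pi_\s(Y)$, assumption \eqref{eq:main:th7:2} yields
\[
  D'F(x,y)(\Pi_\s(X))\supset \kappa\Pi_\s(Y)\supset C,\quad \forall (x,y)\in (U\times V)\cap\grf F,
\]
so all hypotheses of Theorem~\ref{thm:kornerstone} are satisfied. The conclusion of that theorem gives
\[
  \cl{F(x_0+\Pi_\s(X))}\supset y_0+C = \bigl(y_0+\kappa\Pi_\s(Y)\bigr)\cap V,
\]
which is exactly weak $\Pi$-surjectivity with constant $\kappa$.

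For the second assertion, I would run the identical argument but invoke Corollary~\ref{cor:compact} in place of Theorem~\ref{thm:kornerstone}; the compactness hypothesis on $\Pi_\s(X)$ is precisely what allows removing the closure, yielding $F(x_0+\Pi_\s(X))\supset (y_0+\kappa\Pi_\s(Y))\cap V$ and hence full $\Pi$-surjectivity. There is no real obstacle here beyond the bookkeeping: the only nontrivial step is observing that convexity of $V$ transfers the pointwise condition $y_0+c\in V$ to the segment condition $[y_0,y_0+c]\subset V$ required by Theorem~\ref{thm:kornerstone}, and this is a one-line use of $y_0\in V$ together with convexity.
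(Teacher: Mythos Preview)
Your proposal is correct and follows essentially the same approach as the paper's proof: define $C:=\kappa\Pi_{\s}(Y)\cap(V-y_0)$, verify the segment condition $[y_0,y_0+C]\subset V$ via convexity of $V$, and apply Theorem~\ref{thm:kornerstone} (respectively Corollary~\ref{cor:compact}). The only cosmetic difference is that the paper checks $[y_0,y_0+C]\subset V$ by noting $0\in C$ and $C$ convex so $tC\subset C$, whereas you use the two endpoints $y_0,y_0+c\in V$ directly; incidentally, since $y_0\in V$ and $0\in\kappa\Pi_{\s}(Y)$, the set $C$ is never empty, so your case distinction is harmless but unnecessary.
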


\begin{proof} There is nothing to prove if $\grf F\cap(U\times V)=\emptyset$.
	
Otherwise, take any $(x,y)\in \grf F\cap(U\times V)$ and then fix $\s\in\mathbb{R}_+^\infty$ such that $x+\Pi_{\s}(X)\subset U$.
 Set  $C:=\kappa\Pi_{\s}(Y)\cap\{V-y\}$, that is, $y+C = \{y+\kappa\Pi_{\s}(Y)\}\cap V$.

  Then, \eqref{eq:main:th7:2}  yields
\[
   D'F(x,y)(\Pi_{\s}(X))\supset \kappa\Pi_{\s}(Y) \supset  C,\quad \forall (x,y)\in \left( U\times V\right) \cap \grf F.
\]
  Since  $y+C\subset V$ and $tC\subset C$ for all $t\in [0,1]$, because $C$ is convex and $0\in C$, we have that $y+tC\subset V$ for all $t\in [0,1]$, hence $[y,y+C]\subset V$.
  We see that all assumptions of Theorem~\ref{thm:kornerstone} hold, and, therefore,
\[
    \cl{F(x+\Pi_{\s}(X))}\supset y + C.
    \]

   That is,
  $$
  	\cl{F(x+ \Pi_{\s}(X))}\supset \{y + \kappa\Pi_{\s} (Y)\}\cap V
  $$
and $F$ is weakly $\Pi$-surjective on $(U,V)$ with constant $\kappa $.

    In the case when all $\Pi_\s(X)$, $\s\in\mathbb{R}_+^\infty$ are compact, we apply Corollary~\ref{cor:compact} in a similar way. 
\end{proof}

\setcounter{thm}{5}

\begin{thm}\label{thm:w-reg-m-reg}
  Let  $(X,\|\cdot\|_n)$ and $(Y,|\cdot|_n)$ be Fr\'echet spaces and let $U\subset X$ and $V\subset Y$ be nonempty and open.

  If the multi-valued map $F:X\rightrightarrows Y$ with closed graph is weakly $\Pi$-surjective on $(U, V)$ then there is $\theta >0$ such that
  \[
  \begin{array}{c}
  F(B_{\rho_X}(x;r))\supset B_{\rho_Y}(y;\theta r)\cap V,\\[10pt]
  \forall (x,y)\in\grf F\cap (U\times V),\ \forall r:\ 0<r< m_U(x),
  \end{array}
  \]
  where
  $$
  m_U(x) := \mathrm{dist}(x,X\setminus U).
  $$
  That is, $F$ is restrictedly $\gamma$-open at liner rate on $(U,V)$ for $\gamma (x)=m_U(x)$.
\end{thm}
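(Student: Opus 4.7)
The plan is to first upgrade weak $\Pi$-surjectivity to the closure-version metric estimate
\[
\cl{F(B_{\rho_X}(x;r))}\supset B_{\rho_Y}(y;\alpha r)\cap V,\qquad \forall (x,y)\in\grf F\cap(U\times V),\ \forall r\in(0,m_U(x)),
\]
for some fixed $\alpha>0$, and then strip the closure by applying Lemma~\ref{lem:density}. Both $(X,\rho_X)$ and $(Y,\rho_Y)$ are complete metric spaces, so Lemma~\ref{lem:density} is available.

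To prove the closure estimate, I would fix $(x,y)$ and $r$ as above and an arbitrary $y'\in B_{\rho_Y}(y;\alpha r)\cap V$, where $\alpha>0$ is to be chosen, and build the natural multi-index
\[
\s := (|y'-y|_n/\kappa)_{n\ge 0} \in \mathbb{R}_+^\infty,
\]
so that $y'-y\in \kappa\Pi_\s(Y)$ by construction. The key numerical step is to bound $|\s|$ in terms of $\rho_Y(y,y')$. With $g(t)=t/(1+t)$ as in the proof of Lemma~\ref{lem:key}, either $\kappa\ge 1$ (in which case $s_n\le |y'-y|_n$ and $g(s_n)\le g(|y'-y|_n)$), or $\kappa<1$ (in which case \eqref{eq:gct} applied with $c=1/\kappa>1$ gives $g(s_n)=g(c|y'-y|_n)\le (1/\kappa)g(|y'-y|_n)$). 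Either way
\[
|\s|\le \max(1,1/\kappa)\,\rho_Y(y,y').
\]

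With the choice $\alpha := \min(1,\kappa)$, the factor $\max(1,1/\kappa)\cdot \min(1,\kappa)$ equals $1$, so $|\s|\le r < m_U(x)$. By Lemma~\ref{lem:key} (with $c=1$), $\Pi_\s(X)\subset B_{\rho_X}(0;|\s|)$, whence $x+\Pi_\s(X)\subset B_{\rho_X}(x;r)\subset U$. Weak $\Pi$-surjectivity at $(x,y)$ with this $\s$ then yields
\[
y'\in \{y+\kappa \Pi_\s(Y)\}\cap V \subset \cl{F(x+\Pi_\s(X))} \subset \cl{F(B_{\rho_X}(x;r))},
\]
which, since $y'$ was arbitrary, is the claimed closure estimate with rate $\alpha=\min(1,\kappa)$. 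A final invocation of Lemma~\ref{lem:density} upgrades this to $F(B_{\rho_X}(x;r))\supset B_{\rho_Y}(y;\beta r)\cap V$ for any $\beta\in(0,\alpha)$, and any such $\beta$ serves as the required $\theta$ (for concreteness, $\theta := \min(1,\kappa)/2$).

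The only non-routine point is the constant-chasing: the parallelograms $\Pi_\s$ are the unit balls of a Banach-subspace norm $\|\cdot\|_\s$, while the conclusion is phrased in terms of the ambient metric $\rho_Y$. The concavity estimate \eqref{eq:gct} is precisely the ingredient that links the two viewpoints uniformly in $\s$ and forces the case split on whether $\kappa\ge 1$ or $\kappa<1$; once it is in hand, the remaining geometry is a straightforward combination of Lemma~\ref{lem:key} (to pass from $\Pi_\s$ to $\rho_X$-balls) and Lemma~\ref{lem:density} (to dispense with the closure).
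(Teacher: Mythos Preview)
Your proof is correct and follows essentially the same route as the paper's: build $\s$ from the $Y$-seminorms of $y'-y$, use the concavity estimate behind Lemma~\ref{lem:key} to place $\Pi_\s(X)$ inside the $\rho_X$-ball of radius $r$, invoke weak $\Pi$-surjectivity, and finish with Lemma~\ref{lem:density}. The only cosmetic difference is that the paper sets $s_n=\alpha^{-1}|y'-y|_n$ with a single $\alpha<\min\{1,\kappa\}$ (so that $\alpha\s$ has exactly the seminorms of $y'-y$ and Lemma~\ref{lem:key} applies with $c=\alpha^{-1}>1$), thereby avoiding your case split on $\kappa\gtrless 1$; the resulting constants are the same.
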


\begin{proof}
  Let $F$ be weakly $\Pi$-surjective on $(U, V)$ with constant $\kappa$.

  Fix $\alpha > 0$ such that $\alpha < \min\{1,\kappa\}$.

Let $(x,y)\in \grf F$ and $r>0$ be such that $x\in U$ and $r<m_U(x)$. Let $y+v$, where $\rho_Y(0,v)\le \alpha r$, be an arbitrary point from $B_{\rho_Y}(y; \alpha r)\cap V$. Let $\s\in\mathbb{R}_+^\infty$ be defined as
$$
  s_n:=\alpha^{-1}\|v\|_n,\quad n=0,1,\ldots,
$$
so  $v\in\alpha\Pi_\s (Y)$.

Since $\alpha^{-1}>1$, from Lemma~\ref{lem:key} it follows that
$$
  \Pi_\s(X) = \alpha^{-1}\Pi_{\alpha\s}(X)\subset B_{\rho_X}(0;\alpha^{-1}|\alpha\s|),
$$
where $\alpha\s = (\|v\|_0,\|v\|_1,\|v\|_2,\ldots)$. Thus $|\alpha\s|=\rho_Y(0,v)\le \alpha r$. Therefore, $\Pi_\s(X)\subset B_{\rho_X}(0;r)$ and, in particular, $x+\Pi_\s(X)\subset U$. From the definition of weak  $\Pi$-surjectivity (after an obvious transformation) it follows that
$$
  \cl{F(B_{\rho_X}(x;r))}\supset \cl{F(x+\Pi_\s(X))}\supset \{y + \alpha\Pi_\s(Y)\}\cap V.
$$
So, $\cl{F(B_{\rho_X}(x;r))}{\supset} \{y {+} \alpha\Pi_\s(Y)\}\cap V$ and, in particular, $y{+}v{\in} \cl{F(B_{\rho_X}(x;r))}$. Since $v\in B_{\rho_Y}(0; \alpha r)\cap V$ was arbitrary, we get
$$
 \cl{F(B_{\rho_X}(x;r))}\supset B_{\rho_Y}(y;\alpha r)\cap V.
$$
Lemma~\ref{lem:density} completes the proof. 
\end{proof}

\setcounter{thm}{7}

\begin{thm}
	\label{thm:ci}
	Let $X=\cap_0^\infty(X_n,\|\cdot\|_n)$ and $Y=\cap_0^\infty (Y_n,|\cdot|_n)$ be compactly graded spaces. Let $f:X\to Y$ be continuous, strongly G\^ateaux differentiable  and such that $f(0)=0$.
	
	Assume that there are $c>0$ and $d\in \{0\}\cup\mathbb{N}$, such that for each $x\in X$ and $v\in Y$
	\begin{equation}
	\label{eq:f-prime:ld}
	\exists u\in X:\ f'(x)u=v\mbox{ and }
	\|u\|_n\le c|v|_{n+d},\quad\forall n\ge 0.
	\end{equation}	
	Then for each  $y\in Y$ there is $x\in X$ such that
	\begin{equation}
	\label{eq:f:ld}
	f(x)=y\mbox{ and }
	\|x\|_n\le c|y|_{n+d},\quad\forall n\ge 0.
	\end{equation}		
\end{thm}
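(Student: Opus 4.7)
The plan is to derive Theorem~\ref{thm:ci} by a direct application of Corollary~\ref{cor:compact}, with the target set $C$ taken to be the singleton $\{y\}$ and base point $(0, 0) \in \grf f$. Given the target $y \in Y$, set
\[
s_n := c\,|y|_{n+d}, \quad n = 0, 1, \ldots,
\]
so that $\Pi_\s(X) = \{x \in X : \|x\|_n \le c|y|_{n+d}\ \forall n\}$. I apply Corollary~\ref{cor:compact} with $U = X$, $V = Y$, $C = \{y\}$. The trivial case $y = 0$ is handled by $x = 0$, so assume henceforth $y \ne 0$.

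First the structural assumptions of Corollary~\ref{cor:compact} are checked. Since $X$ is compactly graded, Proposition~\ref{pro:ci-comp} gives that $\Pi_\s(X)$ is compact. Continuity of $f$ makes $\grf f$ closed. The containments $0 + \Pi_\s(X) \subset X = U$ and $[0, y] \subset Y = V$ are trivial, and $(0,0) \in \grf f \cap (U \times V)$.

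The key step is to verify the contingent-variation inclusion
\[
D'f\bigl(\bar x, f(\bar x)\bigr)(\Pi_\s(X)) \supset \{y\}, \quad \forall\, \bar x \in X.
\]
Fix $\bar x$. The assumption \eqref{eq:f-prime:ld} applied at $\bar x$ with $v = y$ produces $u \in X$ satisfying $f'(\bar x)u = y$ and $\|u\|_n \le c|y|_{n+d} = s_n$ for every $n$, hence $u \in \Pi_\s(X)$. Strong G\^ateaux differentiability of $f$ at $\bar x$ in direction $u$ then yields
\[
\lim_{t \downarrow 0} \frac{\rho_Y\bigl(f(\bar x + tu),\, f(\bar x) + ty\bigr)}{t} = 0.
\]
Since $f(\bar x + tu) \in f(\bar x + t\,\Pi_\s(X))$ for every $t > 0$, this same limit shows $y \in D'f(\bar x, f(\bar x))(\Pi_\s(X))$.

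Corollary~\ref{cor:compact} then yields $f(\Pi_\s(X)) \supset \{y\}$, producing $x \in \Pi_\s(X)$ with $f(x) = y$; this $x$ automatically satisfies $\|x\|_n \le s_n = c|y|_{n+d}$ for every $n$, which is \eqref{eq:f:ld}. The main obstacle sidestepped by this route is the shift by $d$: it prevents a direct appeal to Theorem~\ref{thm:main-pi-surj}, which would demand $D'f(\bar x, f(\bar x))(\Pi_\s(X)) \supset \kappa\,\Pi_\s(Y)$ with the \emph{same} sequence $\s$ on both sides, and iterating the shift breaks this match. Using Corollary~\ref{cor:compact} with the ``flat'' target $C = \{y\}$ evades the mismatch entirely, and compactness of $\Pi_\s(X)$ is what removes the closure and delivers a genuine solution rather than only an approximate one.
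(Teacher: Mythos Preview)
Your argument is correct. It differs from the paper's in how the shift $d$ is handled: the paper first re-indexes the seminorms of $Y$ by setting $m:=n-d$, reducing to $d=0$, and then invokes Theorem~\ref{thm:main-pi-surj} (together with Proposition~\ref{pro:ci-comp}) to obtain full $\Pi$-surjectivity of $f$ on $(X,Y)$ with constant $c^{-1}$, from which \eqref{eq:f:ld} follows. You instead keep $d$, fix the target $y$, build the tailored sequence $s_n=c|y|_{n+d}$, and apply Corollary~\ref{cor:compact} directly with the singleton $C=\{y\}$. Your route is slightly more economical---it bypasses both the re-indexing trick and the detour through Theorem~\ref{thm:main-pi-surj}---while the paper's route yields the stronger intermediate statement that $f$ is $\Pi$-surjective. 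Your closing remark that the shift ``prevents a direct appeal to Theorem~\ref{thm:main-pi-surj}'' is accurate as stated, but it is worth noting that the paper does use Theorem~\ref{thm:main-pi-surj} after the re-indexing absorbs the shift, so the obstacle is removable rather than fundamental.
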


\begin{proof}
	Since $Y=\cap_{n=d}^\infty (Y_n,|\cdot|_n)$, we can assume without loss of generality that $d=0$. Indeed, set $m:=n-d$, $n=d,d+1,\ldots$ and work with the new $m$-indexation in $Y$.
	
	Condition \eqref{eq:f-prime:ld} can be rewritten as
	$$
		f'(x)(\Pi_{\mathbf{s}}(X))\supset c^{-1}\Pi_{\mathbf{s}}(Y), \quad\forall x\in X,\ \forall \mathbf{s}\in \mathbb{R}_+^\infty.
	$$
	Clearly, for strongly G\^ateaux differentiable function $f$,
	$$
			c^{-1}\Pi_{\mathbf{s}}(Y)\subset f'(x)(\Pi_{\mathbf{s}}(X)) \subset D'f(x,f(x))(\Pi_{\mathbf{s}}(X)).
	$$
	Also, $\grf f$ is closed, because $f$ is continuous.
	
	From Proposition~\ref{pro:ci-comp} and Theorem~\ref{thm:main-pi-surj} it follows that $f$ is $\Pi$-surjective on $(X,Y)$ with constant $c^{-1}$, which  implies \eqref{eq:f:ld}. 
\end{proof}

\setcounter{section}{5}
\section{Conclusions}
	In this article we demonstrate new approach towards the Nash-Moser Theorem.
	
	We define Banach spaces that fit the structure of the constraints and this way take the problem from more geometrical angle.
	
	We tackle the problem in multi-valued setting and reveal its relationship to map regularity.
	
	The method for proving this regularity, which is based on an abstract iteration scheme, is also new.
	
	We believe that the ideas here presented can be developed in several directions.

\bigskip\noindent
\textbf{Acknowledgments:}
We  express our sincere gratitude  to  Prof. Asen Dontchev for constantly pushing us towards this subject.\\
We are deeply thankful to the anonymous referee for pointing some inaccuracies and incompleteness in the initial version.\\
Special thanks are due to Prof. Radek Cibulka for reading the manuscript very carefully and pointing out that in metric space the definition of differentiability we use is more demanding than standard G\^ateaux differentiability.\\
Research is supported by the Bulgarian National  Fund for Scientific Research, contract KP-06-H22/4.

\end{document}